\numberwithin{equation}{section}
\newtheorem{thm}{Theorem}[section]
\newtheorem{lem}[thm]{Lemma}
\newtheorem{prop}[thm]{Proposition}
\theoremstyle{definition}
\newtheorem{rem}[thm]{Remark}
\newcommand\R{{\mathbb R}}
\newcommand\Tma{T_{\mathrm{max}}}
\newcommand\Supp{{\mathrm{supp}}\, }
\newcommand\Cz{{C_0(\R^N )}}
\newcommand\Comp{{\mathrm{c}}}
\newcommand\Tsem{{\boldsymbol{\mathcal T}}}
\newcommand\ppoints{\leaders\hbox to 1em{\hss . \hss}\hfill}
\newcommand\MScN[1]{\href{http://www.ams.org/mathscinet-getitem?mr=#1}{(#1)}}
\newcommand\DOI[1]{\href{http://dx.doi.org/#1}{(doi: #1)}}
\newcommand\LINK[1]{\href{#1}{(link: #1)}}
\newcommand\DI{u_0 }
\newcommand\GVar {{\Psi }}
\newcommand\Cgn {{K}}
\title{Finite-time blowup for a complex Ginzburg-Landau equation}
\author[Thierry Cazenave, Fl\'avio Dickstein and Fred B.~Weissler]{}
\subjclass[2010] {35Q56, 35B44}
 \keywords{Complex Ginzburg-Landau equation, finite time blowup, energy, variance}
\thanks{Research supported by the ``Brazilian-French Network in Mathematics"}
\thanks{Fl\'avio Dickstein  was partially supported by CNPq (Brasil) and by a ``Research in Paris" grant from the City of Paris.}
\thanks{Fred B. Weissler benefited from a sabbatical leave (CRCT) from the University of Paris 13.}
\begin{document}
\maketitle

\centerline{\scshape Thierry Cazenave}
\medskip
{\footnotesize
 \centerline{Universit\'e Pierre et Marie Curie \& CNRS}
 \centerline{Laboratoire Jacques-Louis Lions}
   \centerline{B.C.~187, 4 place Jussieu}
   \centerline{75252 Paris Cedex 05, France}
   \centerline{email address: {\href{mailto:thierry.cazenave@upmc.fr}{\tt thierry.cazenave@upmc.fr}}}
}

\medskip

\centerline{\scshape Fl\'avio Dickstein}
\medskip
{\footnotesize
 \centerline{Instituto de Matem\'atica}
 \centerline{Universidade Federal do Rio de Janeiro}
   \centerline{ Caixa Postal 68530}
   \centerline{21944--970 Rio de Janeiro, R.J., Brazil}
   \centerline{email address: {\href{mailto:flavio@labma.ufrj.br}{\tt flavio@labma.ufrj.br}}}
}

\medskip

\centerline{\scshape Fred B.~Weissler}
\medskip
{\footnotesize
 \centerline{Universit\'e Paris 13,  Sorbonne Paris Cit\'e}
 \centerline{CNRS UMR 7539 LAGA}
   \centerline{99 Avenue J.-B. Cl\'e\-ment}
   \centerline{F-93430 Villetaneuse, France}
   \centerline{email address: {\href{mailto:weissler@math.univ-paris13.fr}{\tt weissler@math.univ-paris13.fr}}}
}

\begin{abstract}
We prove that negative energy solutions of the complex Ginzburg-Landau equation $e^{-i\theta } u_t = \Delta u+  |u|^\alpha u$ blow up in finite time, where $\alpha >0$ and $-\pi /2<\theta <\pi /2$.
For a fixed initial value $u(0)$, we obtain estimates of the blow-up time 
$T_{\mathrm{max}}^\theta $ as $\theta  \to \pm \pi /2 $. It turns out that
$T_{\mathrm{max}}^\theta $ stays bounded (respectively, goes to infinity) as $\theta  \to \pm \pi /2 $
 in the case where the solution of the limiting nonlinear Schr\"o\-din\-ger equation blows up in finite time (respectively, is global). 

\end{abstract}

\bigskip 
%\tableofcontents

\section{Introduction} \label{Intro} 

This paper is concerned with the existence of solutions which blow up in finite time of the Cauchy problem
\begin{equation} \label{GL} \tag{GL}
\begin{cases} 
e^{- i\theta } u_t = \Delta u +  |u|^\alpha u, \\
u(0)= \DI,
\end{cases} 
\end{equation} 
in $\R^N $, where $\alpha >0$ and
\begin{equation*} 
-\frac {\pi } {2} \le \theta \le \frac {\pi } {2}.
\end{equation*} 
More precisely, we seek conditions on the initial value $\DI $ which guarantee that the resulting solution is non-global. In addition, we wish to obtain estimates on the blow-up time, for a given initial value $\DI$, as a function of $\theta $.

Equation~\eqref{GL} with  $\theta =0$ reduces to the well known   nonlinear heat equation $u_t-\Delta u= |u|^\alpha u$. For $\theta = \pm \pi /2$, equation~\eqref{GL} becomes the equally well known nonlinear  Schr\"o\-din\-ger equation $ \pm iu_t+\Delta u+  |u|^\alpha u=0$.
Thus we see that~\eqref{GL} is ``intermediate" between the nonlinear heat and Schr\"o\-din\-ger equations. Our overall objective is to understand finite time blowup of solutions of~\eqref{GL} from a unified point of view, for all $-\pi /2\le \theta \le \pi /2$.

The equation~\eqref{GL} is a particular case of the more general complex Ginzburg-Landau equation
\begin{equation} \label{gGL} 
u_t= e^{i\theta }\Delta u + e^{i\gamma } |u|^\alpha u.
\end{equation} 
Equation~\eqref{gGL} has been studied in the context of a wide variety of applications. 
For example, the nonlinear Schr\"o\-din\-ger equation (i.e.~\eqref{gGL} with $\theta =\gamma =\pm \pi /2$) is an important model in nonlinear optics and in the study of weakly nonlinear dispersive waves. We refer the reader to the monograph~\cite{SulemS} which has an extensive discussion of these and other applications. The nonlinear heat equation  (i.e.~\eqref{gGL} with $\theta =\gamma =0$), often with a more general nonlinear term, is also an important model, in particular in biology and chemistry. 
We refer the reader to the monograph~\cite{Fife} for a sampling of such applications.
In the more general case, equation~\eqref{gGL} is used to model such phenomena as  superconductivity, chemical turbulence and various types of fluid flows. 
See~\cite{DoeringGHN} and the references cited therein. 
A key feature associated to the phenomena modeled by~\eqref{gGL} is the development of singularities. Solutions of ~\eqref{gGL} may be global in time or may cease to exist at some finite (blow-up) time.  The existence of  blowing-up solutions  may be interpreted as the appearance of instabilities in the various applications of~\eqref{gGL}.  
 
Local and global existence of solutions of~\eqref{gGL}, on both $\R^N $ and a domain $\Omega \subset \R^N $, 
are known under various boundary conditions and assumptions on the parameters, see e.g.~\cite{DoeringGL, GinibreVu, GinibreVd, MischaikowM, OkazawaYu, OkazawaYd, OkazawaYt, OkazawaYq}.
On the other hand, there are relatively few results concerning the existence of  solutions of~\eqref{gGL} for which finite-time blowup occurs.
In~\cite{Zaag}, blowing-up solutions for the equation~\eqref{gGL} on $\R^N $ are proved to exist, when the equation is ``close" to the nonlinear heat equation $u_t =\Delta u+  |u|^\alpha u$, i.e. when $\theta =0$ and $ |\gamma |$ is small.
A  result in the same spirit is obtained in~\cite{Rottschafer} when the equation is 
``close" to the nonlinear Schr\"o\-din\-ger  equation $i u_t + \Delta u+  |u|^\alpha u=0$. 
The result in~\cite{Zaag} was significantly extended in~\cite{MasmoudiZ}, where the authors give  a rigorous justification of the numerical and formal arguments of~\cite{PlechacS, PoppSKK}.
More precisely, they consider the equation~\eqref{gGL}  on $\R^N $ with $-\pi /2 <\theta ,\gamma <\pi /2$ and  prove the existence of blowing-up solutions when
$\tan^2 \gamma + (\alpha +2) \tan \gamma \tan \theta <\alpha +1$.
Note also that, under certain assumptions on the parameters, blowup for an equation similar to~\eqref{gGL}  on a bounded domain with Dirichlet or periodic boundary conditions, but with the nonlinearity  $ |u|^{\alpha +1}$ instead of $ |u|^\alpha u$  is proved to occur in~\cite{Nasibovu, Nasibovd, OzawaY}.

The equation~\eqref{GL} has certain features not shared by the more general equation~\eqref{gGL}. First of all, stationary solutions of~\eqref{GL} satisfy the same elliptic equation $\Delta u+  |u|^\alpha u=0$, independent of the parameter $\theta $. Furthermore, and more significant for the present article, it turns out that its solutions satisfy energy identities similar to those satisfied by the solutions of the nonlinear heat and Schr\"o\-din\-ger equations. See Proposition~\ref{eGGLu} below.
Recall the energy functional is defined by
\begin{equation}  \label{fGGLt} 
E(w)= \frac {1} {2}\int _{\R^N }  |\nabla w|^2 -\frac {1} {\alpha +2}\int _{\R^N }  |w|^{\alpha +2},
\end{equation} 
for $w\in \Cz \cap H^1(\R^N) $.
This property was exploited in~\cite{SnoussiT}, where the authors apply Levine's argument~\cite{Levine} (see also~\cite{Ball}) and prove finite-time blowup of all negative energy solutions when   $N=1,2$, $\alpha =2$ and $ |\theta |< \pi /4$.
The calculations of~\cite{SnoussiT} can be carried out for more general values of $\alpha $, and the condition  $ |\theta |<\pi /4$ takes the form $\cos^2 \theta >\frac {2} {\alpha +2}$.

Our first main result is that if the initial value $\DI$ has negative energy and $-\pi /2 <\theta <\pi /2$, then the corresponding solution of~\eqref{GL} blows up in finite time. 
We make no assumption on $\alpha >0$. 
We essentially follow the energy method of~\cite{Levine}. The improvement with respect to~\cite{SnoussiT}, where a condition on $\alpha $ and $\theta $ appears, is due to the use of the identity~\eqref{fGGLsbu} below. 

\begin{thm}\label{eGGLt} 
Suppose
\begin{equation} \label{fGGLuA} 
-\frac {\pi } {2} < \theta < \frac {\pi } {2},
\end{equation} 
let $\DI \in \Cz \cap H^1(\R^N) $ and let $u\in C([0, \Tma), \Cz \cap \R^N  )$ be the 
corresponding maximal solution
 of~\eqref{GL}.
If $E(\DI) < 0$, then  $u$ blows up in finite time.
More precisely,
\begin{equation}  \label{feGGLud:s} 
\Tma  \le \frac { \| \DI \| _{ L^2 }^2} {\alpha (\alpha +2) (-E( \DI )) \cos \theta } .
\end{equation} 
\end{thm}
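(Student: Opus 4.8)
The plan is to run a Levine-type argument on the squared $L^2$ norm $M(t)\Deq\|u(t)\|_{L^2}^2$, driven by the energy, reducing everything to a single autonomous differential inequality. First I would record the two identities that I expect Proposition~\ref{eGGLu} and identity~\eqref{fGGLsbu} to provide. Writing the equation as $u_t = e^{i\theta}(\Delta u + |u|^\alpha u)$ and setting $A\Deq\int_{\R^N}|u|^{\alpha+2} - \int_{\R^N}|\nabla u|^2$, one has $\int_{\R^N}\bar u\,u_t = e^{i\theta}A$ with $A$ real, whence
\[ M'(t) = 2\cos\theta\,A(t),\qquad \frac{d}{dt}E(u(t)) = -\cos\theta\,\|u_t\|_{L^2}^2 \le 0. \]
In particular $e(t)\Deq -E(u(t))$ is nondecreasing, so $e(t)\ge e(0) = -E(\DI) > 0$ for all $t\in[0,\Tma)$.

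Next comes the step I expect to be decisive, and where the gain over~\cite{SnoussiT} lies. Because the equation fixes the phase of $u_t$, the modulus of $\int_{\R^N}\bar u\,u_t$ equals the full quantity $|A|$ rather than merely its real part; Cauchy--Schwarz then gives $A^2 = |\int_{\R^N}\bar u\,u_t|^2 \le \|u\|_{L^2}^2\,\|u_t\|_{L^2}^2 = M\,\|u_t\|_{L^2}^2$. Substituting $\|u_t\|_{L^2}^2 = e'/\cos\theta$ and $A = M'/(2\cos\theta)$ converts this into the first key inequality
\[ e'(t)\ \ge\ \frac{(M'(t))^2}{4\cos\theta\;M(t)}. \]
Using $A^2$ in place of $(\re\int_{\R^N}\bar u\,u_t)^2 = \cos^2\theta\,A^2$ is precisely what removes the factor $\cos^2\theta$, and with it the restriction $\cos^2\theta > 2/(\alpha+2)$ of~\cite{SnoussiT}.

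I would then produce a matching lower bound for $M'$. Rewriting $A = 2e + \frac{\alpha}{\alpha+2}\int_{\R^N}|u|^{\alpha+2}$ through the definition of $E$, and discarding the nonnegative gradient term in the energy to get $\int_{\R^N}|u|^{\alpha+2}\ge(\alpha+2)\,e$, yields $A\ge(\alpha+2)e$, that is, $M'(t)\ge 2(\alpha+2)\cos\theta\,e(t)$. Dividing this into the first inequality gives $(\ln e)'\ge\frac{\alpha+2}{2}(\ln M)'$; integrating from $0$ to $t$ and feeding the result back into $M'\ge 2(\alpha+2)\cos\theta\,e$ closes the system into the single inequality $M'(t)\ge c\,M(t)^{(\alpha+2)/2}$, with $c = 2(\alpha+2)\cos\theta\,e(0)\,M(0)^{-(\alpha+2)/2}$.

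Since $(\alpha+2)/2 > 1$, this super-linear inequality forces $M(t)\to\infty$ in finite time: integrating $M^{-(\alpha+2)/2}M'\ge c$ bounds the blow-up time by $M(0)^{1-p}/((p-1)c)$ with $p = (\alpha+2)/2$, which simplifies exactly to $\|\DI\|_{L^2}^2/\big(\alpha(\alpha+2)(-E(\DI))\cos\theta\big)$, and divergence of $M$ forces $\Tma<\infty$. The main obstacle is the sharp Cauchy--Schwarz step together with the justification that $M$ and $E$ are differentiable with the stated identities up to $\Tma$ (the requisite regularity of the maximal solution); once these are in hand, combining the two first-order inequalities and integrating is routine.
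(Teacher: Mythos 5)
Your proposal is correct and takes essentially the same route as the paper: the phase identity $\bigl|\int_{\R^N} u_t\overline{u}\bigr| = |I(u)|$ (the paper's~\eqref{fGGLsbu}, which you rightly identify as the gain over~\cite{SnoussiT}), Cauchy--Schwarz linking $E'$, $M'$ and that identity, the bound $-I(u)\ge (\alpha+2)(-E(u))$, and the closure into $M'\ge c\,M^{(\alpha+2)/2}$ are exactly the paper's steps, your integration of $(\ln e)'\ge \frac{\alpha+2}{2}(\ln M)'$ being the same computation as the paper's $\frac{d}{dt}\bigl(-e f^{-\frac{\alpha+2}{2}}\bigr)\ge 0$. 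Both arguments produce the identical bound~\eqref{feGGLud:s}.
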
  

Of course, $E(\DI )$ in the statement of Theorem~~\ref{eGGLt} refers to the energy functional defined by~\eqref{fGGLt}. 
Theorem~\ref{eGGLt} shows that any solution of~\eqref{GL} with negative initial energy blows up in finite time provided~\eqref{fGGLuA} holds. 
This raises the question of the behavior of the blow-up time as $\theta $ approaches $\pm \pi /2$. 
Indeed, recall that the Cauchy problem for the nonlinear Schr\"o\-din\-ger  equation, i.e. the equation~\eqref{GL}  with $\theta =\pm \pi /2$ is locally well-posed in $H^1 (\R^N ) $ if $\alpha <4/(N-2)$. (See~\cite{GinibreVt, Kato}.) Moreover, if $\alpha <4/N$ then all solutions are global (see~\cite{GinibreVt}), while if $\alpha \ge 4/N$ then some solutions blow up in finite time (see~\cite{Glassey, Zakharov}). More precisely, if the initial value $\DI \in H^1 (\R^N ) $ with negative energy has finite variance (i.e. $\int  |x|^2  |\DI |^2 <\infty $), then the solution blows up in finite time. The same conclusion holds if, instead of assuming that $\DI$ has finite variance, we assume that either $N=1$ and $\alpha =4$, or else $N\ge 2$, $\DI$ is radially symmetric and $\alpha \le 4$, see~\cite{OgawaTu, OgawaTd}. 

Fix an initial value $\DI \in \Cz \cap H^1 (\R^N ) $ such that $E(\DI )<0$ and, given $\theta \in (-\pi /2, \pi /2)$, let $u^\theta $ be the corresponding solution of~\eqref{GL}, so that $u^\theta $ blows up in finite time by Theorem~\ref{eGGLt}. If $\alpha <4/N$, then the solution of~\eqref{GL} for $\theta =\pm \pi /2$ is global, so we may expect that the blow-up time of $u^\theta $ goes to infinity as $\theta \to \pm \pi /2$. This is indeed the case, as the following result shows.

\begin{thm}\label{eGGLud} 
Fix an initial value $\DI \in \Cz \cap H^1 (\R^N  ) $ and, for every $\theta $
satisfying~\eqref{fGGLuA},
let $u^\theta \in C([0, \Tma^\theta ), \Cz \cap H^1  (\R^N ) )$ denote the corresponding maximal solution of~\eqref{GL}. If 
\begin{equation*} 
0<\alpha <\frac {4} {N},
\end{equation*} 
then there exists a constant $c = c( N, \alpha ,  \| \DI\| _{ L^2 }, E( \DI )) >0$ such that
\begin{equation} \label{feGGLud:d} 
 \Tma ^\theta \ge  \frac {c } {\cos \theta } ,
\end{equation} 
 for all $-\frac {\pi } {2}< \theta <\frac {\pi } {2}$.
\end{thm}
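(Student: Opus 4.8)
The plan is to prove an a priori bound showing that the $H^1$ norm of $u^\theta$ cannot become infinite before time $c/\cos\theta$; by the blow-up alternative of the local theory it then suffices to keep $\|\nabla u^\theta(t)\|_{L^2}$ and $\|u^\theta(t)\|_{L^2}$ finite. We may assume $\DI\not\equiv 0$, the trivial solution being global. Fix $\theta$ satisfying~\eqref{fGGLuA}, write $u=u^\theta$, and set $M(t)=\frac12\|u(t)\|_{L^2}^2$, $A(t)=\|\nabla u(t)\|_{L^2}^2$ and $B(t)=\int_{\R^N}|u(t)|^{\alpha+2}$. The energy identities of Proposition~\ref{eGGLu} give
\begin{equation*}
M'(t)=\cos\theta\,\bigl(B(t)-A(t)\bigr),
\end{equation*}
and show that $t\mapsto E(u(t))$ is non-increasing, so that $E(u(t))\le E(\DI)$ for all $t$. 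The crucial structural point is that every time derivative carries the factor $\cos\theta$.

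The heart of the matter is to bound $A$ by a superlinear function of $M$ alone. Since $0<\alpha<4/N$, the Gagliardo--Nirenberg inequality reads
\begin{equation*}
B\le C_{GN}\,\|u\|_{L^2}^{\,q}\,A^{\sigma},\qquad \sigma=\frac{N\alpha}{4}\in(0,1),\quad q=\alpha+2-\frac{N\alpha}{2}>0,
\end{equation*}
the sub-unit power $\sigma<1$ being exactly where sub-criticality enters. Combining this with the identity $A=2E(u(t))+\frac{2}{\alpha+2}B$, Young's inequality (to absorb the factor $A^\sigma$), and $E(u(t))\le E(\DI)$, I obtain a bound of the form $A(t)\le C_1\bigl(1+M(t)^{p}\bigr)$, where $p=1+\frac{2\alpha}{4-N\alpha}>1$ and $C_1$ depends only on $N$, $\alpha$ and $E(\DI)$. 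Feeding this back into the Gagliardo--Nirenberg estimate gives $B(t)\le\Phi(M(t))$ for a continuous increasing function $\Phi$, depending only on $N$, $\alpha$ and $E(\DI)$, with $\Phi(m)$ comparable to $m^{p}$ as $m\to\infty$ (one checks $q/2+p\sigma=p$).

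It remains to run an ODE comparison. Since $A\ge0$, the identity for $M'$ yields the differential inequality $M'(t)\le\cos\theta\,\Phi(M(t))$. As $p>1$, the integral $I=\int_{M(0)}^{\infty}\frac{dm}{\Phi(m)}$ is finite, so comparison with the solution of $Z'=\cos\theta\,\Phi(Z)$, $Z(0)=M(0)$, which blows up precisely at $I/\cos\theta$, shows that $M(t)\le Z(t)<\infty$ for every $t<I/\cos\theta$. On this interval $A(t)\le C_1\bigl(1+M(t)^p\bigr)$ is finite as well, hence $\|u(t)\|_{H^1}$ is bounded, and the blow-up alternative forces $\Tma^\theta\ge I/\cos\theta$. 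Setting $c=I=c(N,\alpha,\|\DI\|_{L^2},E(\DI))$ gives~\eqref{feGGLud:d}.

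The main obstacle is closing the a priori estimate in the second paragraph: absorbing the gradient term $A^\sigma$ and eliminating the running energy $E(u(t))$ in favour of the fixed constant $E(\DI)$. This is precisely where the sub-criticality $\alpha<4/N$ is indispensable, since it forces $\sigma<1$; for $\alpha\ge 4/N$ the gradient cannot be absorbed and no bound of $A$ by $M$ alone is available. Once the estimate is closed, the uniform factor $\cos\theta$ multiplying each time derivative delivers the $1/\cos\theta$ scaling of the lower bound automatically.
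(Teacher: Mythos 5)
Your proposal is correct and follows essentially the same route as the paper: both arguments combine energy monotonicity $E(u^\theta(t))\le E(\DI)$, the subcritical Gagliardo--Nirenberg inequality with Young's inequality to absorb the gradient term, and the identity $\frac{d}{dt}\|u^\theta\|_{L^2}^2=-2\cos\theta\, I(u^\theta)$, so that the growth of the $L^2$ norm carries the factor $\cos\theta$; the only difference is the finishing step, where the paper stops at the first time $\|u^\theta\|_{L^2}^2$ reaches $2\|\DI\|_{L^2}^2$ (on which interval $|I(u^\theta)|$ is bounded by a constant), while you run the equivalent nonlinear ODE comparison $M'\le \cos\theta\,\Phi(M)$ with $\Phi$ superlinear. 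One small point of precision: the blow-up alternative you invoke must be the $L^2$ one of Remark~\ref{eGGLzd} (available precisely because $\alpha<4/N$), not an $H^1$ criterion --- the local theory in $\Cz\cap H^1(\R^N)$ of Proposition~\ref{eGGLzu} only gives an $L^\infty$ criterion --- but since your comparison bounds exactly $\|u^\theta(t)\|_{L^2}$, this is what your argument in fact uses, just as in the paper's proof.
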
 

\begin{rem} 
Note that, under the assumptions of Theorem~\ref{eGGLud} and if, in addition, $E( \DI) <0$, there exist $c,C>0$ such that
\begin{equation*} 
\frac {c} {\cos \theta } \le \Tma^\theta \le \frac {C} {\cos \theta } ,
\end{equation*} 
for all $-\pi /2< \theta <\pi /2$. This follows from~\eqref{feGGLud:d} and~\eqref{feGGLud:s}.  
\end{rem} 

Global existence for the nonlinear Schr\"o\-din\-ger equation with $\alpha <4/N$ follows from the conservation of charge and energy and Gagliardo-Nirenberg's inequality. Similarly, Theorem~\ref{eGGLud} follows from energy identities and Gagliardo-Nirenberg's inequality.

\begin{rem} 
Theorems~\ref{eGGLt} and~\ref{eGGLud} are equally valid, with essentially the same proofs, for solutions of~\eqref{GL} on a smooth domain $\Omega \subset \R^N $ with Dirichlet boundary conditions. Moreover, in the case of a bounded domain Ball's proof of finite time blowup~\cite{Ball}
works equally well for~\eqref{GL} with $-\pi /2<\theta <\pi /2$, using the energy identities in Section~\ref{Cauchy}.
\end{rem} 

As observed above, if $\alpha \ge 4/N$ then negative energy, finite variance  solutions of the nonlinear Schr\"o\-din\-ger equation blow up in finite time. Thus we may expect that the blow-up time of $u^\theta $ remains bounded as $\theta \to \pm \pi /2$. 
We have the following result. 

\begin{thm}\label{eGGLhbu} 
Suppose
\begin{equation} \label{feGGLh:u} 
N\ge 2,\quad 
\frac {4} {N} \le \alpha \le 4,
\end{equation} 
and fix a radially symmetric initial value $\DI\in H^1 (\R^N ) \cap \Cz$.
Given any $\theta $ satisfying~\eqref{fGGLuA},
let $u^\theta \in C([0, \Tma^\theta ), \Cz \cap H^1 (\R^N )  )$ denote the corresponding maximal solution of~\eqref{GL}.
If $E(\DI)<0$, then there exists $ \overline{T} <\infty $ such that $\Tma ^\theta \le  \overline{T} $ for all $-\frac {\pi } {2} < \theta <\frac {\pi } {2}$. 
\end{thm}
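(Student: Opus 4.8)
The plan is to adapt the virial (variance) argument that proves finite-time blowup for the nonlinear Schr\"odinger equation, exploiting that near $\theta=\pm\pi/2$ equation~\eqref{GL} is a small perturbation of NLS and that, by the energy identities of Proposition~\ref{eGGLu}, the energy is \emph{dissipated}: $E(u^\theta(t))\le E(\DI)<0$ for all $t$. First I would reduce to $\theta$ near $\pm\pi/2$: for $\theta$ in any compact subinterval of $(-\pi/2,\pi/2)$, estimate~\eqref{feGGLud:s} of Theorem~\ref{eGGLt} already bounds $\Tma^\theta$ uniformly on that subinterval, so only the limits $\theta\to\pm\pi/2$ need attention. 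Moreover, since $\overline{u^\theta}$ solves~\eqref{GL} with $\theta$ replaced by $-\theta$ and the same (conjugated, still radial, equal-energy) initial value, it suffices to treat $\theta\to\pi/2^-$, the case $\theta\to-\pi/2^+$ following upon replacing $\DI$ by $\overline{\DI}$.

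Because $\DI$ is only assumed radial and in $H^1(\R^N)\cap\Cz$ (not of finite variance), I would use a localized variance $V_\phi(t)=\int_{\R^N}\phi(|x|)\,|u^\theta(t)|^2$, where $\phi(r)=r^2$ for $r\le R$ and $\phi$ is bounded for $r$ large, in the spirit of the radial argument of Ogawa--Tsutsumi~\cite{OgawaTu,OgawaTd}; note that radial symmetry is preserved by the flow. Computing $V_\phi'$ and $V_\phi''$ from $u_t=e^{i\theta}(\Delta u+|u|^\alpha u)$ and splitting $e^{i\theta}=\cos\theta+i\sin\theta$, the evolution of $V_\phi$ decomposes into a Schr\"odinger-type part (coefficient $\sin\theta$) and a heat-type part (coefficient $\cos\theta$). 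The key computation is that the Schr\"odinger part, after the Pohozaev-type integrations by parts, contributes at leading order
\begin{equation*}
\sin^2\theta\Bigl(8\int|\nabla u^\theta|^2-\frac{4N\alpha}{\alpha+2}\int|u^\theta|^{\alpha+2}\Bigr)
=\sin^2\theta\Bigl(16\,E(u^\theta)+\frac{4(4-N\alpha)}{\alpha+2}\int|u^\theta|^{\alpha+2}\Bigr),
\end{equation*}
which, since $\alpha\ge 4/N$ and $E(u^\theta(t))\le E(\DI)$, is bounded above by $16\sin^2\theta\,E(\DI)<0$, with $\sin^2\theta\to1$ as $\theta\to\pi/2$.

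It then remains to control the remaining terms: the heat-type contributions (each carrying a factor $\cos\theta$ and involving the weighted gradient and potential-energy densities produced by the real part of $e^{i\theta}$), together with the cutoff error terms supported in $|x|>R$ coming from the localization. The latter I would bound using the radial Gagliardo--Nirenberg (Strauss) inequality, which controls $\int_{|x|>R}|u^\theta|^{\alpha+2}$ by a small multiple of $\int|\nabla u^\theta|^2$ plus lower-order terms precisely when $N\ge 2$ and $\alpha\le 4$; this is where those hypotheses enter. The main obstacle is exactly here: although the heat-type terms carry the small factor $\cos\theta$, they involve derivatives of $u^\theta$ that need not be small as the solution approaches blowup, so one cannot simply discard them. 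I expect to absorb them into the strictly negative leading term above, using the spare negative $\int|u^\theta|^{\alpha+2}$ available when $\alpha>4/N$ together with the energy dissipation to control $\int|\nabla u^\theta|^2$ (the borderline case $\alpha=4/N$, where that spare term vanishes, requiring a more careful use of the smallness of $\cos\theta$). The outcome should be that, for $\theta$ close enough to $\pi/2$, one has a uniform bound $V_\phi''(t)\le -c<0$ with $c=c(N,\alpha,\DI)$ independent of $\theta$. Since $V_\phi\ge 0$, this forces $V_\phi$ to vanish within a time bounded independently of $\theta$, yielding the uniform bound $\Tma^\theta\le\overline{T}$.
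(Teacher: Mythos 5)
Your overall strategy (truncated variance plus the radial Strauss/Ogawa--Tsutsumi estimate, with the dissipated energy supplying the negative leading term) is indeed the strategy of the paper, and your leading-order Pohozaev computation is correct. But there is a genuine gap, located exactly at the point you flag as ``the main obstacle.'' Your plan is to establish $V_\phi''(t)\le -c<0$ on the \emph{whole} interval $[0,\Tma^\theta)$ and then conclude from $V_\phi\ge 0$. Every ingredient you invoke, however, requires a bound on $\|u^\theta(t)\|_{L^2}$: the radial cutoff estimate controlling $\int_{|x|>R}|u^\theta|^{\alpha+2}$ has constants depending on the $L^2$ norm, and the heat-type terms cannot be ``absorbed'' pointwise in time --- they involve $u_t$, so the only viable treatment is to recognize them as exact time derivatives (as in~\eqref{fVaruqg}), integrate, and then bound the resulting spacetime integral $\cos\theta\int_0^t\int|u^\theta|^{\alpha+2}$ by the mass-growth identity~\eqref{fGGLn}, which gives $2\cos\theta\frac{\alpha}{\alpha+2}\int_0^t\int|u^\theta|^{\alpha+2}\le \|u^\theta(t)\|_{L^2}^2-\|\DI\|_{L^2}^2$ (see~\eqref{fPRFh}). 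Unlike NLS, the mass of~\eqref{GL} is not conserved: by~\eqref{fGGLn} it is strictly increasing when $E(\DI)<0$, and its growth rate involves $\int|u^\theta|^{\alpha+2}$, which is unbounded as $t\uparrow\Tma^\theta$. Consequently none of your estimates can be made uniform up to $\Tma^\theta$; they hold only up to the first time $\tau^\theta$ at which $\|u^\theta(t)\|_{L^2}^2$ reaches a fixed multiple $K\|\DI\|_{L^2}^2$ --- and a uniform bound on $\tau^\theta$ says, by itself, nothing about $\Tma^\theta$.

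The missing ingredient is the paper's Lemma~\ref{eGGLq}: a Levine-type convexity argument applied to $h(t)=\int_0^t\int_{\R^N}|u|^2$, using~\eqref{fGGLuub}, \eqref{fGGLsbu} and~\eqref{fGGLs}, which shows that if $E(\DI)\le 0$ then $\Tma^\theta\le\frac{\alpha+4}{\alpha}\tau^\theta$, where $\tau^\theta$ is the mass-threshold time~\eqref{feGGLq:u}. With this lemma in hand, the variance argument is run only on $[0,\tau^\theta)$, where the mass bound makes Lemma~\ref{eOzau} and the bound on the $\cos\theta$-terms legitimate; this yields $0\le C+Ct+N\alpha E(\DI)\frac{t^2}{2}$ there, hence a uniform bound on $\tau^\theta$, hence on $\Tma^\theta$. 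Two smaller remarks: your reduction to $\theta$ near $\pm\pi/2$ via~\eqref{feGGLud:s} and conjugation is correct but becomes unnecessary once this structure is in place (the paper's argument is uniform in $\theta$); and your direct splitting $e^{i\theta}=\cos\theta+i\sin\theta$ produces a cross term $\sin\theta\cos\theta\,\Im\int[\Delta\GVar\,\overline{u}+2\nabla\GVar\cdot\nabla\overline{u}\,](\Delta u+|u|^\alpha u)$ which is neither sign-definite nor an obvious time derivative; the paper sidesteps it by writing $-\sin\theta\,\Im\int[\cdots]u_t=\cos\theta\,\Re\int[\cdots]u_t-\Re\int[\cdots]e^{-i\theta}u_t$ before substituting the equation, so that the NLS-type terms appear with coefficient $1$ and everything carrying a $\cos\theta$ is a total time derivative.
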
 

Blowup for the equation~\eqref{GL} with $-\frac {\pi } {2} <\theta <\frac {\pi } {2}$
(i.e. Theorem~~\ref{eGGLt})
 is proved by an energy argument. On the other hand, blowup for the nonlinear Schr\"o\-din\-ger equation is proved by a variance argument (or a similar argument for a truncated variance as in~\cite{OgawaTu, OgawaTd}). 
It turns out that for the equation~\eqref{GL} there is also a variance identity (and a truncated variance identity as well), see formulas~\eqref{fVaruqgBu} and~\eqref{fVaruqg} below. By combining the information derived from the truncated variance identity with the energy identities, we are able to establish the uniform estimate of the blow-up time of Theorem~\ref{eGGLhbu}.
We mention that the conditions that $ \DI $ be radially symmetric and that $\alpha \le 4$ are necessary for the crucial estimate in our proof, see Section~\ref{sCmts}.
We do not know if the conclusion of Theorem~\ref{eGGLhbu} is true without these hypotheses.

Note that the assumptions on $\DI$ in Theorem~\ref{eGGLhbu} are precisely those made by Ogawa and Tsutsumi in~\cite{OgawaTu}, where the authors eliminate the finite variance assumption of~\cite{Glassey, Zakharov}. One might expect that, if we were willing to assume that $\DI$ has finite variance, then we would not need the assumptions that $\alpha \le 4$ and that $\DI$ is radially symmetric. In this case, the proof would be based on the variance identity~\eqref{fVaruqgBu} rather than on the truncated variance identity~\eqref{fVaruqg}. Unfortunately, in this case as well, and for apparently different reasons, the same conditions are necessary for the crucial estimate of this other proof. See Section~\ref{Further}.

The rest of this paper is organized as follows. 
In the next section, we recall the basic local well-posedness results for the Cauchy problem~\eqref{GL} and establish the fundamental energy identities.  
Theorems~\ref{eGGLt}, \ref{eGGLud} and~\ref{eGGLhbu} are proved successively in Sections~\ref{Blowup}, \ref{Lower} and~\ref{Upper}.
In Section~\ref{sCmts} we  comment on the obstacles to proving Theorem~\ref{eGGLhbu} under less restrictive hypotheses. In Section~\ref{Further}, we outline the proof which could be given of Theorem~\ref{eGGLhbu} under the additional assumption of finite variance and comment on the related hypotheses. 

\section{The local Cauchy problem: $-\pi /2 < \theta <\pi /2$} \label{Cauchy} 

The linear equation associated with~\eqref{GL} is
 \begin{equation*}
 u_t = e^{i\theta } \Delta u .
 \end{equation*} 
It is well known that the operator $e^{i\theta } \Delta $ with domain $H^2 (\R^N ) $
 generates a semigroup
of contractions $(\Tsem _\theta (t) )_{ t\ge 0 }$ on $L^2 (\R^N ) $. 
Moreover, since~\eqref{fGGLuA} holds,
 the semigroup  $(\Tsem _\theta (t) )_{ t\ge 0 }$ is analytic. 
Indeed, the semigroup $e^{z\Delta }$ is analytic in the half plane $\Re z>0$.
In particular,  $\Tsem_\theta (t)\psi  = G_\theta (t) \star \psi $, where the kernel $G_\theta (t)$ is defined by
\begin{equation*} 
G_\theta (t)  (x)\equiv  (4\pi t e^{i\theta })^{-\frac {N} {2}} e^{- \frac { |x|^2} {4t e^{i\theta }}}.
\end{equation*} 
Since
\begin{equation*} 
 |G_\theta  (t) (x)| = (4\pi t)^{-\frac {N} {2}} e^{- \frac {  |x|^2 \cos \theta } {4 t}},
\end{equation*} 
it follows that
\begin{equation} \label{fInGadbu} 
 \| G_\theta (t)\| _{ L^\sigma  }  = 
 \begin{cases} 
 \displaystyle 
  \sigma ^{-\frac {N} {2\sigma }}  (4\pi  t)^{-\frac {N} {2} (1- \frac {1} {\sigma })}
(\cos \theta ) ^{-\frac {N} {2\sigma }}
 & \text{if }1\le \sigma <\infty  ,\\
(4\pi t)^{-\frac {N} {2}} & \text{if }\sigma =\infty . 
 \end{cases} 
\end{equation} 
We deduce from~\eqref{fInGadbu} and Young's inequality that
\begin{equation} \label{fDispu} 
 \|\Tsem _\theta (t) \psi \| _{ L^r } \le  (\cos \theta )^{-\frac {N} {2} (1- \frac {1} {p}+ \frac {1} {r})}
 t^{-\frac {N} {2} (  \frac {1} {p}- \frac {1} {r})}  \|\psi \| _{ L^p },
\end{equation} 
for $1\le p\le r\le \infty $ and $\theta $ satisfying~\eqref{fGGLuA}.
It follows easily from~\eqref{fDispu} that $(\Tsem _\theta (t) )_{ t\ge 0 }$ is a bounded $C_0$ semigroup on $L^p (\R^N ) $ for $1\le p<\infty $ and on $\Cz$.

It is immediate by a contraction mapping argument that the Cauchy problem~\eqref{GL} is locally well  posed in $\Cz$. Moreover, it is easy to see using the estimates~\eqref{fDispu} that $\Cz \cap H^1 (\R^N ) $ is preserved under the action of~\eqref{GL}. More precisely, we have the following result.

\begin{prop} \label{eGGLzu} 
Suppose~\eqref{fGGLuA}.  
Given any $\DI \in \Cz \cap H^1 (\R^N ) $, there exist $T>0$ and a unique function $u \in  C([0,T], \Cz
\cap H^1 (\R^N ) ) 
\cap C((0,T), H^2 (\R^N ) )\cap C^1((0, T), L^2 (\R^N ) ) $ which satisfies~\eqref{GL} for all $t\in (0,T)$
and such that $u(0)= \DI$. 
Moreover, $u$ can be extended to a maximal interval $[0, \Tma)$, and if $\Tma <\infty $ then $  \|u(t)\| _{ L^\infty  }\to \infty $ as $t \uparrow \Tma$.
\end{prop}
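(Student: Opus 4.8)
The plan is to rewrite \eqref{GL} in the equivalent form $u_t = e^{i\theta}\Delta u + e^{i\theta}|u|^\alpha u$ and to solve the associated Duhamel equation
\[
u(t) = \Tsem_\theta(t)\DI + e^{i\theta}\int_0^t \Tsem_\theta(t-s)\bigl(|u(s)|^\alpha u(s)\bigr)\,ds ,
\]
first by a contraction argument in $\Cz$ and then by propagating the $H^1$ regularity. For the first step I would fix $M>0$ and $T>0$ and work in the closed ball of radius $M$ in $C([0,T],\Cz)$. Since the map $z\mapsto |z|^\alpha z$ satisfies $\bigl| |z_1|^\alpha z_1 - |z_2|^\alpha z_2\bigr| \le C(|z_1|^\alpha + |z_2|^\alpha)|z_1-z_2|$ for every $\alpha>0$, the nonlinearity is Lipschitz on bounded sets of $\Cz$; combined with the bound $\|\Tsem_\theta(t)\psi\|_{L^\infty}\le (\cos\theta)^{-N/2}\|\psi\|_{L^\infty}$ coming from \eqref{fInGadbu} with $\sigma=1$, the Duhamel map is a strict contraction on $[0,T]$ once $T=T(\|\DI\|_{L^\infty})$ is small enough. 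This yields a unique local solution $u\in C([0,T],\Cz)$; note that $T$ depends on $\DI$ only through $\|\DI\|_{L^\infty}$, which will be used for the blow-up alternative.

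Next I would show that this $\Cz$-solution lies in $H^1(\R^N)$ when $\DI$ does. Since $\Tsem_\theta(t)=e^{i\theta t\Delta}$ is a Fourier multiplier it commutes with $\nabla$ and is an $L^2$-contraction, so differentiating the Duhamel formula and using the pointwise bound $|\nabla(|u|^\alpha u)|\le(\alpha+1)|u|^\alpha|\nabla u|$ (valid a.e.\ for every $\alpha>0$) gives
\[
\|\nabla u(t)\|_{L^2} \le \|\nabla \DI\|_{L^2} + (\alpha+1)\int_0^t \|u(s)\|_{L^\infty}^\alpha\,\|\nabla u(s)\|_{L^2}\,ds .
\]
Because $\|u(\cdot)\|_{L^\infty}$ is already bounded on $[0,T]$, Gronwall's lemma bounds $\|\nabla u(t)\|_{L^2}$; the same computation with $\nabla$ removed controls $\|u(t)\|_{L^2}$. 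Hence $u\in L^\infty((0,T),H^1)$, and strong continuity of $\Tsem_\theta$ on $L^2$ together with the $L^2$-continuity of the Duhamel term upgrades this to $u\in C([0,T],\Cz\cap H^1)$.

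The interior regularity $u\in C((0,T),H^2)\cap C^1((0,T),L^2)$ is where I would invoke analyticity of $(\Tsem_\theta(t))$, which holds precisely because \eqref{fGGLuA} forces $\cos\theta>0$ and hence $e^{i\theta}t$ to lie in the right half-plane. Analyticity provides the smoothing estimate $\|\Delta\,\Tsem_\theta(t)\psi\|_{L^2}\le Ct^{-1}\|\psi\|_{L^2}$, while $f(u)=|u|^\alpha u$ is locally Hölder continuous in $t$ with values in $L^2$ (since $u\in C([0,T],\Cz\cap H^1)$ and $f$ is Lipschitz on bounded subsets of $L^\infty\cap H^1$). The standard parabolic bootstrap for analytic semigroups then promotes the mild solution to a strong solution on $(0,T)$: $u(t)\in D(e^{i\theta}\Delta)=H^2(\R^N)$ for $t>0$, the equation holds pointwise, and $u_t=e^{i\theta}(\Delta u+|u|^\alpha u)\in C((0,T),L^2)$. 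Finally, the maximal solution and the blow-up alternative follow by a standard continuation argument: if $\Tma<\infty$ and $\|u(t)\|_{L^\infty}$ stayed bounded as $t\uparrow\Tma$, then the uniform local existence time (which depends only on $\|\cdot\|_{L^\infty}$) together with the Gronwall bound on $\|\nabla u\|_{L^2}$ would allow restarting past $\Tma$, contradicting maximality.

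The step I expect to be most delicate is reconciling the low regularity of $z\mapsto|z|^\alpha z$ with the $H^1$ estimates when $0<\alpha<1$, where this map fails to be $C^1$ at the origin. The remedy is precisely the structure above: the fixed point is carried out in $\Cz$ alone, where only the Lipschitz bound on bounded sets is needed, and the $H^1$ information is obtained afterwards through the \emph{a priori} gradient inequality, which requires only the pointwise chain-rule estimate $|\nabla(|u|^\alpha u)|\le(\alpha+1)|u|^\alpha|\nabla u|$ rather than Lipschitz continuity of $u\mapsto\nabla(|u|^\alpha u)$ in $L^2$. Uniqueness is inherited from the $\Cz$ fixed point, so no contraction in $H^1$ is required.
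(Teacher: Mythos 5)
Your proposal is correct and follows essentially the route the paper itself indicates (the paper only sketches this result): a contraction mapping argument in $\Cz$, where $z\mapsto|z|^\alpha z$ is Lipschitz on bounded sets, followed by propagation of $H^1$ regularity via the semigroup estimates, analytic smoothing of $(\Tsem_\theta(t))_{t\ge0}$ for the interior $H^2$ and $C^1$ regularity, and a continuation argument using that the local existence time depends only on $\|\DI\|_{L^\infty}$. Your closing observation, that the fixed point must be run in $\Cz$ alone and the $H^1$ bound obtained a posteriori (via the pointwise estimate $|\nabla(|u|^\alpha u)|\le(\alpha+1)|u|^\alpha|\nabla u|$, applied to the Picard iterates so that Gronwall is used on quantities known to be finite) precisely because the nonlinearity is not Lipschitz in $H^1$ for $\alpha<1$, is exactly the right point of care.
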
 

\begin{rem}  \label{eGGLzd} 
Let $\DI \in \Cz \cap H^1 (\R^N ) $ and let $u$ be the corresponding solution of~\eqref{GL} defined on the maximal interval $[0, \Tma)$, and given by Proposition~\ref{eGGLzu}.
If, in addition, $\alpha <4/N$, then~\eqref{GL} is locally well posed in $L^2 (\R^N ) $ (see~\cite{Weissler}). It is not difficult to show using the estimates~\eqref{fDispu} that the maximal existence times in $\Cz$ and $L^2 (\R^N ) $ are the same;
 and so if $\Tma <\infty $, then $  \|u(t)\| _{ L^2  }\to \infty $ as $t \uparrow \Tma$.
\end{rem} 

We collect below the energy identities that we use in the next sections.

\begin{prop} \label{eGGLu} 
Suppose~\eqref{fGGLuA} and let $\DI \in \Cz \cap H^1 (\R^N ) $. If $u$ is the corresponding  solution of~\eqref{GL} given by Proposition~$\ref{eGGLzu}$ and defined on the maximal interval $[0, \Tma)$, then the following properties hold. 

\begin{enumerate} [{\rm (i)}]

\item \label{eGGLu:u}
Let the energy functional $E$ be defined by~\eqref{fGGLt}. 
It follows that
\begin{equation} \label{fGGLq}
\cos \theta \int _s^t  \int _{\R^N }  |u_t|^2 +  E(u(t))= E(u(s)),
\end{equation} 
for all $0\le s< t< \Tma$. 

\item \label{eGGLu:d}
Set 
\begin{equation}  \label{fGGLc} 
I(w)=  \int _{\R^N }  |\nabla w|^2 - \int _{\R^N }  |w|^{\alpha +2},
\end{equation} 
for $w\in \Cz \cap H^1  (\R^N ) $. It follows that
\begin{equation} \label{fGGLsbu}
\Bigl| \int _{\R^N } u_t  \overline{u}  \Bigr|=  | I(u)|,
\end{equation} 
and 
\begin{equation} \label{fGGLs}
\frac {d} {dt} \int _{\R^N }  |u|^2= - 2\cos \theta   I(u),
\end{equation} 
for all $0< t< \Tma$. 

\end{enumerate} 

\end{prop}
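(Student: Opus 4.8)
The plan is to derive both parts by pairing the equation, written as $u_t = e^{i\theta}(\Delta u + |u|^\alpha u)$, against the complex conjugate of a suitable multiplier, integrating over $\R^N$, and taking real parts. Throughout I would work on the open interval $(0,\Tma)$, where Proposition~\ref{eGGLzu} provides $u(t)\in H^2(\R^N)$ and $u_t(t)\in L^2(\R^N)$; since also $u(t)\in\Cz$, the term $|u|^\alpha u$ lies in $L^2(\R^N)$, so the equation is a genuine identity in $L^2(\R^N)$ and every pairing below is well defined.

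I would first establish the two $L^2$-identities~\eqref{fGGLsbu} and~\eqref{fGGLs}, which are the more elementary ones. Pairing the equation against $\overline{u}$, integrating by parts in the Laplacian term, and using $\int_{\R^N}|u|^\alpha u\,\overline{u}=\int_{\R^N}|u|^{\alpha+2}$ gives
\begin{equation*}
\int_{\R^N} u_t\,\overline{u} = e^{i\theta}\Bigl(-\int_{\R^N}|\nabla u|^2+\int_{\R^N}|u|^{\alpha+2}\Bigr) = -e^{i\theta}\, I(u),
\end{equation*}
with $I$ as in~\eqref{fGGLc}. Since $I(u)$ is real, taking the modulus yields~\eqref{fGGLsbu}. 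Moreover $u\in C^1((0,\Tma),L^2(\R^N))$ makes $\frac{d}{dt}\int_{\R^N}|u|^2=2\re\int_{\R^N}u_t\,\overline{u}$ legitimate, and taking real parts of the displayed identity (using $\re e^{i\theta}=\cos\theta$) produces~\eqref{fGGLs}.

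For the energy identity~\eqref{fGGLq} I would differentiate $E(u(t))$ term by term, using $\frac{d}{dt}\tfrac12\int|\nabla u|^2=-\re\int u_t\,\overline{\Delta u}$ for the gradient part and $\frac{d}{dt}\tfrac{1}{\alpha+2}\int|u|^{\alpha+2}=\re\int u_t\,\overline{|u|^\alpha u}$ for the nonlinear part, so that after substituting the equation and taking the real part,
\begin{equation*}
\frac{d}{dt}E(u(t)) = -\re\int_{\R^N} u_t\,\overline{\bigl(\Delta u+|u|^\alpha u\bigr)} = -\re\int_{\R^N} u_t\,\overline{e^{-i\theta}u_t} = -\cos\theta\int_{\R^N}|u_t|^2.
\end{equation*}
Integrating over any $[s,t]\subset(0,\Tma)$ gives~\eqref{fGGLq} for $0<s<t<\Tma$, and I would extend it to $s=0$ using that $t\mapsto E(u(t))$ is continuous on $[0,\Tma)$: this holds because $u\in C([0,\Tma),\Cz\cap H^1(\R^N))$ controls $\|\nabla u\|_{L^2}$ directly and, through the bound $\int_{\R^N}|u|^{\alpha+2}\le\|u\|_{L^\infty}^\alpha\|u\|_{L^2}^2$, controls the nonlinear part of $E$ as well.

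The main obstacle is the rigorous justification of the two differentiation formulas used above, since the available regularity $C((0,\Tma),H^2)\cap C^1((0,\Tma),L^2)$ is weaker than $C^1$ in $H^1$. For the gradient term the point is the standard fact that, under exactly this regularity, $t\mapsto\|\nabla u\|_{L^2}^2$ is $C^1$ with derivative $-2\re\int u_t\,\overline{\Delta u}$ (proved by a regularization or difference-quotient argument, since $u_t$ is only known to lie in $L^2$ rather than in $H^1$). For the nonlinear term, differentiation under the integral sign must be justified by dominated convergence rather than by appealing to smoothness of $w\mapsto|w|^{\alpha+2}$, using the integrable dominating bound $\int_{\R^N}|u|^{\alpha+1}|u_t|\le\|u\|_{L^\infty}^\alpha\|u\|_{L^2}\|u_t\|_{L^2}$. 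Once these two formulas are in place, both parts of the proposition follow from the elementary algebra indicated.
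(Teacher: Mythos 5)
Your proof is correct and follows essentially the same route as the paper: part (ii) comes from pairing the equation with $e^{i\theta}\overline{u}$ (your $\int_{\R^N} u_t\overline{u}=-e^{i\theta}I(u)$ is exactly the paper's identity~\eqref{fGGLsbd}), and part (i) is the paper's multiplication by $\overline{u}_t$ read in the opposite direction, i.e.\ differentiating $E(u(t))$ and substituting the equation to get $\frac{d}{dt}E(u(t))=-\cos\theta\,\|u_t\|_{L^2}^2$. The only difference is that you spell out the regularity justifications (difference quotients for the gradient term, a mean-value/dominated-convergence bound for the nonlinear term, and the limit $s\downarrow 0$), which the paper leaves implicit.
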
 

\begin{proof} 
The identity~\eqref{fGGLq} follows by multiplying the equation~\eqref{GL}  by  $ \overline{u}_t $, integrating by parts on $\R^N  $ and taking the real part. 
Multiplying the equation~\eqref{GL}  by  $e^{i\theta } \overline{u} $ and integrating by parts on $\R^N  $, we obtain
\begin{equation}  \label{fGGLsbd}
\int _{\R^N } u_t  \overline{u}= - e^{i\theta } I(u).
\end{equation} 
Identity~\eqref{fGGLsbu} follows by taking the modulus of both sides of~\eqref{fGGLsbd}, while~\eqref{fGGLs} follows by taking the real part.
\end{proof} 

\begin{rem} 
It follows easily from~\eqref{fGGLs},\eqref{fGGLc} and~\eqref{fGGLq} that
\begin{equation} \label{fGGLn}
\frac {d} {dt} \int _{\R^N}  |u|^2=  \frac {2\alpha } {\alpha + 2} \cos \theta \int _{\R^N}    |u|^{\alpha +2} + 4\cos^2 \theta \int _0^t \int _{\R^N}   |u_t|^2 - 4 \cos \theta E(\DI),
\end{equation}
and
\begin{multline}  \label{fGGLuub}
\frac {d} {dt} \int _{\R^N }  |u|^2=  \alpha \cos \theta  \int _{\R^N }    |\nabla u|^2\\ + 
2(\alpha +2) \cos^2 \theta  \int _0^t \int _{\R^N }  |u_t|^2 - 2(\alpha +2) \cos \theta E(\DI).
\end{multline}   
\end{rem} 

\section{Proof of Theorem~$\ref{eGGLt}$} \label{Blowup} 

We use the argument of~\cite[pp.~185-186]{HarauxW}. 
Note that, by~\eqref{fGGLq},  $E(u(t))\le E(\DI)<0$ for all $0\le t<\Tma$, so that
\begin{equation} \label{fEstIE} 
\begin{split} 
I(u(t)) & = (\alpha +2) E(u(t)) - \frac {\alpha } {2 }\int  _{ \R^N  } |\nabla u(t)|^2 \\ & \le (\alpha +2) E(u(t))
 \le (\alpha +2) E(\DI ) <0 ,
\end{split} 
\end{equation} 
for all $0<t<\Tma$.
Set 
\begin{equation*} 
f(t)=  \|u(t)\| _{ L^2 }^2 ,\quad e(t)= E(u(t)).
\end{equation*} 
We deduce from~\eqref{fGGLq}  that
\begin{equation}  \label{fHWu} 
\frac {de} {dt}   = -\cos \theta   \|u_t\| _{ L^2 }^2 \le 0, 
\end{equation} 
and from~\eqref{fGGLs}  and~\eqref{fEstIE} that  
\begin{equation} \label{fHWd}  
 \frac {df} {dt} = -2\cos \theta  I(u(t)) >0 .
\end{equation} 
It follows from~\eqref{fHWu} and the Cauchy-Schwarz inequality that
\begin{equation} \label{fHWt} 
-f \frac {de} {dt} =f  \cos \theta   \|u_t\| _{ L^2 }^2 = \cos \theta   \|u\| _{ L^2 }^2  \|u_t\| _{ L^2 }^2 
\ge \cos \theta  \Bigl| \int _{\R^N } u_t  \overline{u}  \Bigr|^2. 
\end{equation} 
Using~\eqref{fGGLsbu} and~\eqref{fHWd}, we deduce  that
\begin{equation*} 
\begin{split} 
-f \frac {de} {dt} &  \ge \cos \theta  (I(u(t)))^2= (-I(u(t))) (- \cos \theta I(u(t)))  \\ &
= \frac {1} {2}  (-I(u(t))) \frac {df} {dt}
\ge  \frac {\alpha +2} {2}  (-e) \frac {df} {dt} . 
\end{split} 
\end{equation*} 
This means that
\begin{equation*} 
\frac {d} {dt}(-e f^{-\frac {\alpha +2} {2}})  \ge 0,
\end{equation*} 
so that
\begin{equation} \label{fHWs} 
-e \ge \eta f^{\frac {\alpha +2} {2}},
\end{equation} 
where
\begin{equation} \label{fHWp} 
\eta = ( -E( \DI) ) \| \DI \| _{ L^2 }^{- (\alpha +2)}
\end{equation} 
 It follows from~\eqref{fHWd}, \eqref{fEstIE} and~\eqref{fHWs}  that
\begin{equation*} 
\frac {df} {dt}\ge 2(\alpha +2) (\cos \theta ) (-e) 
 \ge 2 \eta  (\alpha +2) (\cos \theta )f^{\frac {\alpha +2} {2}},
\end{equation*} 
so that
\begin{equation} \label{fHWn} 
\frac {d} {dt}  [  \eta \alpha  (\alpha +2) (\cos \theta ) t +   f^{- \frac {\alpha } {2}} ] \le 0.
\end{equation} 
Integrating~\eqref{fHWn} between $0$ and $t\in (0, \Tma)$, and applying~\eqref{fHWp}, we deduce that
\begin{equation*} 
t\le \frac { \| \DI \| _{ L^2 }^2} {\alpha (\alpha +2) (- E(\DI )) \cos \theta },
\end{equation*} 
for all $0<t<\Tma$. The result follows by letting $t\uparrow \Tma$.

\section{Proof of Theorem~$\ref{eGGLud}$} \label{Lower} 
We first note that by Gagliardo-Nirenberg's inequality there exists $c= c(N )$ such that
\begin{equation} \label{fGNu} 
\int  _{ \R^N  } |u |^{2+ \frac {4} {N}} \le c  \int  _{ \R^N  } |\nabla u|^2  \Bigl( \int  _{ \R^N  } |u|^2 \Bigr)^{\frac {2} {N}}
\end{equation} 
for all $u\in H^1(\R^N)$. Applying H\"older's inequality and~\eqref{fGNu}, we deduce that
\begin{equation} \label{fGNd}
\begin{split} 
\int _{ \R^N  } |u|^{\alpha +2}  & \le  \Bigl( \int  _{ \R^N  } |u|^{2 +\frac {4} {N}} \Bigr)^{ \frac {N\alpha } {4} } \Bigl( \int  _{ \R^N  } |u|^2 \Bigr)^{ \frac {4-N\alpha } {4} } \\ & \le c^{ \frac {N\alpha } {4}}  
 \| \nabla u \| _{ L^2 }^{ \frac {N\alpha } {2} }  \|u\| _{ L^2 }^{  \frac {4- (N-2 )\alpha } {2} }.
\end{split} 
\end{equation} 
We now use Young's inequality
\begin{equation*} 
x y \le \frac {N\alpha } {4} \varepsilon ^{ \frac  {4} {N\alpha }} x^{ \frac  {4} {N\alpha }} + \frac {4-N\alpha } {4} \varepsilon ^{ - \frac {4} {4-N\alpha }} y^{  \frac {4} {4-N\alpha }},
\end{equation*} 
with
\begin{equation*} 
\varepsilon =  \Bigl( \frac {\alpha +2} {N\alpha c} \Bigr)^{ \frac  {N\alpha }{4} },
\end{equation*} 
and we obtain
\begin{equation*} 
\begin{split} 
\frac {1} {\alpha +2}\int _{ \R^N  } |u|^{\alpha +2}  & \le \frac {1} {4}  \|\nabla u\| _{ L^2 }^2 + 
\frac {4-N\alpha } {4(\alpha +2)}  \Bigl( \frac {N\alpha c}  {\alpha +2} \Bigr)^{ \frac  {N\alpha }{4- N\alpha } }  \|u\| _{ L^2 }^{  \frac {2[4- (N-2 )\alpha ]} {4-N\alpha } }  \\
 & \le \frac {1} {4}  \|\nabla u\| _{ L^2 }^2 + 
 ( Nc )^{ \frac  {N\alpha }{4- N\alpha } } \|u\| _{ L^2 }^{  \frac {2[4- (N-2 )\alpha ]} {4-N\alpha } } ,
\end{split} 
\end{equation*} 
so that
\begin{equation}  \label{feGGLud:nbu} 
\frac {1} {\alpha +2}\int _{ \R^N  } |u|^{\alpha +2} \le   \frac {1} {4} \|\nabla u\| _{ L^2 }^2 + 
 \bigl[  ( Nc )^{  N\alpha   }
 \|u\| _{ L^2 }^{  2[4- (N-2 )\alpha  ] }  \bigr]^{  \frac {1 } {4-N\alpha } }.
\end{equation} 
We now prove~\eqref{feGGLud:d}. If $\Tma ^\theta =\infty $, there is nothing to prove. We then assume $\Tma ^\theta <\infty $, so that
\begin{equation} \label{feGGLud:p} 
 \| u^\theta (t) \| _{ L^2 } \uparrow \infty \quad  \text{as}\quad  t\uparrow \Tma ^\theta  ,
\end{equation} 
by Remark~\ref{eGGLzd}. Set 
\begin{equation*} 
S ^\theta = \sup \{ t\in [0,\Tma^\theta );\,  \|u^\theta (s)\| _{ L^2 }^2 \le 2  \| \DI \| _{ L^2 }^2  \text{ for }0\le s\le t  \}.
\end{equation*} 
It follows from~\eqref{feGGLud:p} that $S ^\theta < \Tma ^\theta $ and
\begin{equation}  \label{feGGLud:n} 
 \|u^\theta (S ^\theta )\| _{ L^2 }^2 = 2  \| \DI \| _{ L^2 }^2.
\end{equation} 
Since $E(u^\theta (t))\le E (\DI )$ by~\eqref{fGGLq}  and
\begin{equation} \label{feGGLud:nbt} 
\|u^\theta (t)\| _{ L^2 }^2 \le 2  \| \DI \| _{ L^2 }^2  ,
\end{equation} 
for $0\le t\le S ^\theta $, it follows from~\eqref{feGGLud:nbu} that
\begin{equation} \label{feGGLud:nbd} 
 \| \nabla u^\theta (t)\| _{ L^2 }^2 \le 4E(\DI) +  4   \Cgn  ^{  \frac {1 } {4-N\alpha } } ,
\end{equation} 
where
\begin{equation} \label{fDefCgn} 
\Cgn = ( Nc )^{  N\alpha   }
(2 \| \DI \| _{ L^2 }^2)^{ 4- (N-2 )\alpha   }.
\end{equation} 
Furthermore,  \eqref{feGGLud:nbu}, \eqref{feGGLud:nbt}  and~\eqref{feGGLud:nbd}  imply
\begin{equation*} 
 \|u ^\theta (t) \| _{ L^{\alpha +2} }^{\alpha +2} \le 
(\alpha +2) E(\DI ) +
2(\alpha +2) \Cgn^{  \frac {1 } {4-N\alpha } }  ,
\end{equation*}
so that 
\begin{equation} \label{feGGLud:uz} 
\begin{split} 
 |I(u ^\theta (t)) | &\le  \max  \Bigl\{  \| \nabla u^\theta (t)\| _{ L^2 }^2 ,   \|u^\theta  \| _{ L^{\alpha +2} }^{\alpha +2}  \Bigr\} \\
 &\le (\alpha +4) [E (\DI )]^+ +
  2 (\alpha +2) \Cgn ^{  \frac {1 } {4-N\alpha } }  ,
\end{split} 
\end{equation}
for $0\le t\le S ^\theta $. Applying~\eqref{fGGLs} and~\eqref{feGGLud:uz}, we deduce that
\begin{equation} \label{feGGLud:uu} 
 \|u^\theta ( S^\theta )\| _{ L^2 }^2 \le  \| \DI \| _{ L^2 }^2 + 2  (\cos \theta ) \Bigl[  (\alpha +4) [E (\DI )]^+  +  2 (\alpha +2) \Cgn ^{  \frac {1 } {4-N\alpha } }  \Bigr]  S ^\theta .
\end{equation} 
It now follows from~\eqref{feGGLud:uu} and~\eqref{feGGLud:n} that
\begin{equation} \label{feGGLud:ud} 
S ^\theta  \ge  \frac { \| \DI \| _{ L^2 }^2 } {2 \Bigl[  (\alpha +4) [E (\DI )]^+ \\ +
  2 (\alpha +2) \Cgn ^{  \frac {1 } {4-N\alpha } }
  \Bigr]  \cos \theta  }.
\end{equation} 
Since $\Tma ^\theta \ge S ^\theta $, the result follows from~~\eqref{feGGLud:ud}.  

\begin{rem} 
Suppose $E( \DI) \le 0$. It follows from~\eqref{feGGLud:ud} that
\begin{equation} \label{fLoweru} 
\Tma ^\theta  \ge  \frac { \| \DI \| _{ L^2 }^2 } {4 (\alpha +2) \Cgn ^{  \frac {1 } {4-N\alpha } } \cos \theta  }.
\end{equation} 
For a fixed $\theta $, the right-hand side converges to $0$ very fast as $\alpha \uparrow 4/N$, so the estimate is certainly not optimal with respect to the dependence on $\alpha $.
Compare the estimate from above given in Remark~\ref{eGGLut}.

\end{rem} 

\section{Proof of Theorem~$\ref{eGGLhbu}$} \label{Upper} 

Our proof of Theorem~\ref{eGGLhbu} is modeled on the proof of finite time blowup for the nonlinear Schr\"o\-din\-ger equation (\cite{Zakharov, Glassey, OgawaTu}). 
The basic idea is to estimate $\frac {d^2} {dt^2}\int  \Psi (x) |u|^2$ for an appropriate function $\Psi >0$, in terms of the initial energy $E(\DI )$. If $E(\DI )<0$, this estimate implies that $\int \Psi (x) |u|^2$, becomes negative in finite time, thus showing that the solution cannot be global. 

In the case of~\eqref{GL}, we  have the following generalized variance identity. 

\begin{lem}\label{eGGLuu} 
 Fix a real-valued function $\GVar \in C^\infty (\R^N ) \cap W^{4, \infty } (\R^N ) $. 
Suppose~\eqref{fGGLuA}, let $\DI \in \Cz \cap H^1 (\R^N ) $ and consider 
the  corresponding maximal solution
 $u\in C([0, \Tma), \Cz \cap H^1 (\R^N ) )$  
 of~\eqref{GL}.
 It follows that the map $t\mapsto  \int  _{ \R^N  } \Psi  |u|^2$ belongs to $C^2([0, \Tma))$, 
\begin{multline} \label{fVarug} 
\frac {1} {2}\frac {d} {dt}\int  _{ \R^N  } \GVar  |u|^2 = \cos \theta  \Bigl( -\int  _{ \R^N  } \GVar   |\nabla u|^2 + \int  _{ \R^N  } \GVar   |u|^{\alpha +2} + \frac 12 \int _{ \R^N  } \Delta  \GVar  |u|^2 \Bigr) \\ + \sin \theta  \Im \int _{ \R^N  } \nabla  \GVar  \overline{u} \nabla u,
\end{multline} 
and
\begin{multline} \label{fVaruqg} 
\frac {1} {2}\frac {d^2} {dt^2}\int  _{ \R^N  } \GVar  |u|^2 =
 -\frac {1}{2} \int _{ \R^N  } \Delta^2  \GVar  |u|^2 - \frac {\alpha}{\alpha+2} \int _{ \R^N  } \Delta  \GVar  |u|^{\alpha +2} \\
 +2\Re \int _{ \R^N  } \langle H(\GVar ) \nabla \overline{u},\nabla u\rangle  
+ \cos \theta  \frac {d} {dt}   \int _{ \R^N  } \Bigl\{ - 2  \GVar   |\nabla u|^2+ \frac {\alpha +4} {\alpha +2}   \GVar   |u|^{\alpha +2}
+ \Delta  \GVar   |u|^2 \Bigr\}  
\\ -2 \cos^2 \theta \int _{ \R^N  }  \GVar   |u_t|^2,
\end{multline} 
for all $0\le t<\Tma$, where $H( \GVar )$ is the Hessian matrix $(\partial ^2 _{ ij }\GVar ) _{ i,j }$.
\end{lem}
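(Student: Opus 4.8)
The plan is to derive both identities directly from the evolution equation, obtaining \eqref{fVarug} by multiplying \eqref{GL} by $\Psi\overline{u}$ and \eqref{fVaruqg} by differentiating \eqref{fVarug} once more in time. Throughout I would work on the open interval $(0,\Tma)$, where Proposition~\ref{eGGLzu} guarantees $u(t)\in H^2(\R^N)$ and $u_t(t)\in L^2(\R^N)$, so that every integration by parts and time differentiation below is classically justified. The $C^1$ statement up to $t=0$ would then follow because the quantities appearing in \eqref{fVarug} depend continuously on $u$ in the $H^1\cap\Cz$ topology, with the endpoint behavior of the more singular terms in \eqref{fVaruqg} addressed separately (see the last paragraph).

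For \eqref{fVarug} I would start from $\frac12\frac{d}{dt}\int\Psi|u|^2=\Re\int\Psi\overline u\,u_t$ and substitute $u_t=e^{i\theta}(\Delta u+|u|^\alpha u)$. Integrating the Laplacian term by parts gives $\int\Psi\overline u\,\Delta u=-\int\Psi|\nabla u|^2-\int\overline u\,\nabla\Psi\cdot\nabla u$; writing $e^{i\theta}=\cos\theta+i\sin\theta$ and separating real and imaginary parts, the symmetric (real) part $\Re\int\overline u\,\nabla\Psi\cdot\nabla u=\frac12\int\nabla\Psi\cdot\nabla|u|^2=-\frac12\int\Delta\Psi|u|^2$ produces the $\frac12\int\Delta\Psi|u|^2$ term carried by $\cos\theta$, while the imaginary part produces the $\sin\theta\,\Im\int\nabla\Psi\,\overline u\,\nabla u$ term. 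This reproduces \eqref{fVarug} exactly.

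For \eqref{fVaruqg} I would differentiate \eqref{fVarug} in $t$, again replacing $u_t$, $\nabla u_t$ and $\Delta\overline u$ by their expressions from \eqref{GL}. Differentiating the momentum term $\Im\int\nabla\Psi\,\overline u\,\nabla u$ and carrying out the standard virial integrations by parts is the heart of the matter: splitting $\Delta u+|u|^\alpha u$ into its linear and nonlinear parts and moving derivatives onto $\Psi$ yields the biharmonic term $-\frac12\int\Delta^2\Psi|u|^2$, the Hessian term $2\Re\int\langle H(\Psi)\nabla\overline u,\nabla u\rangle$, and the nonlinear term $-\frac{\alpha}{\alpha+2}\int\Delta\Psi|u|^{\alpha+2}$, each weighted by a trigonometric factor. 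The key observation is that the $\sin^2\theta$-weighted contribution is exactly $\sin^2\theta$ times the nonlinear Schr\"odinger virial expression; writing $\sin^2\theta=1-\cos^2\theta$ then splits off the $\theta$-independent block of \eqref{fVaruqg} from the remaining $\cos\theta$- and $\cos^2\theta$-weighted terms.

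The main obstacle is the bookkeeping that reassembles those remaining terms into the stated form. Differentiating $\int\Psi|\nabla u|^2$ and using $\Delta\overline u=e^{i\theta}\overline{u}_t-|u|^\alpha\overline u$ generates a term $-2\cos\theta\int\Psi|u_t|^2$, and one must verify that the coefficients $-2$, $\frac{\alpha+4}{\alpha+2}$, $1$ inside the total derivative $\cos\theta\frac{d}{dt}\int\{\cdots\}$ are precisely those for which all surviving $\cos\theta$-weighted pieces assemble into a single time derivative, leaving only the residual $-2\cos^2\theta\int\Psi|u_t|^2$. I expect the delicate points to be (i) tracking the several contributions proportional to $\frac{d}{dt}\int\Psi|u|^{\alpha+2}$, which recombine with coefficient $\frac{\alpha}{\alpha+2}$, and (ii) the behaviour at $t=0$: since $\int\Psi|u_t|^2$ need not remain bounded as $t\downarrow 0$ when $\DI\notin H^2$, the full strength of \eqref{fVaruqg} is genuinely an identity on $(0,\Tma)$, and the regularity up to the endpoint must be argued with care.
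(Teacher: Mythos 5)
Your proposal is correct and follows essentially the same route as the paper: derive \eqref{fVarug} by pairing the equation with $\Psi\overline u$, differentiate in time, use the virial integrations by parts to isolate the $\theta$-independent block (biharmonic, Hessian, nonlinear terms), and use the localized-energy computation (your substitution $\Delta\overline u=e^{i\theta}\overline u_t-|u|^\alpha\overline u$, which is the paper's identity \eqref{fVaruug}) to produce $-2\cos^2\theta\int\Psi|u_t|^2$ and assemble the $\cos\theta\frac{d}{dt}$ block with the coefficients $-2$, $\frac{\alpha+4}{\alpha+2}$, $1$. Your $\sin^2\theta=1-\cos^2\theta$ splitting is algebraically the same device as the paper's rewriting of $-\sin\theta\,\Im\int[\Delta\Psi\overline u+2\nabla\Psi\cdot\nabla\overline u]u_t$ as $\cos\theta\,\Re\int[\cdots]u_t-\Re\int[\cdots]e^{-i\theta}u_t$ in \eqref{fVarqg}, so the two bookkeepings coincide term by term.
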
 

\begin{proof} 

Multiplying the equation~\eqref{GL}  by $e^{i\theta }  \GVar (x) \overline{u} $, taking the real part and using the identity
\begin{equation*} 
2 \Re (\nabla \GVar   \overline{u} \nabla u)= \nabla \cdot (\nabla  \GVar  |u|^2) -\Delta  \GVar   |u|^2,
\end{equation*} 
we obtain~\eqref{fVarug}. 
We now differentiate~\eqref{fVarug} with respect to $t$.
We begin with the term in factor of $\sin \theta $ and we note that, using the identity
\begin{equation*} 
\nabla  \GVar   \overline{u} \nabla u_t= \nabla \cdot (\nabla  \GVar  u_t  \overline{u} ) - (\nabla  \GVar  \cdot \nabla  \overline{u}) u_t - \Delta  \GVar  \overline{u} u_t,  
\end{equation*} 
and integration by parts,
\begin{equation*} 
\frac {d} {dt}  \Bigl(  \sin \theta  \Im \int _{ \R^N  } \nabla  \GVar  \overline{u} \nabla u  \Bigr) = - \sin \theta 
 \Bigl(    \Im \int _{ \R^N  } \Delta  \GVar  \overline{u} u_t +2 \Im \int _{ \R^N  } (\nabla  \GVar \cdot \nabla   \overline{u}) u_t \Bigr). 
\end{equation*} 
i.e.
\begin{equation*} 
\frac {d} {dt}  \Bigl(  \sin \theta  \Im \int _{ \R^N  } \nabla  \GVar  \overline{u} \nabla u  \Bigr) =-  \sin \theta 
\Im \int _{ \R^N  } [  \Delta  \GVar    \overline{u}  + 2   \nabla  \GVar  \cdot \nabla   \overline{u}   ] u_t. 
\end{equation*} 
We rewrite this last identity in the form
\begin{multline} \label{fVarqg} 
\frac {d} {dt}  \Bigl(    \sin \theta  \Im \int _{ \R^N  } \nabla  \GVar  \overline{u} \nabla u  \Bigr) =  \cos \theta \Re \int _{ \R^N  } [ 
\Delta  \GVar  \overline{u} +2   \nabla  \GVar  \cdot \nabla  \overline{u} ] u_t \\ -   
\Re \int _{ \R^N  } [  \Delta  \GVar  \overline{u}  + 2   \nabla  \GVar  \cdot \nabla   \overline{u}   ] e^{-i \theta }u_t . 
\end{multline} 
Using~\eqref{GL} and the identities
\begin{gather*} 
\Re (\nabla  \GVar \cdot \nabla  \overline{u})  |u|^\alpha u= \frac {1} {\alpha +2} \nabla \cdot (\nabla  \GVar  |u|^{\alpha +2} ) -
\frac {1} {\alpha +2} \Delta  \GVar  |u|^{\alpha +2},\\
\Re \nabla (\nabla  \GVar \cdot \nabla  \overline{u} )\cdot \nabla u= \frac {1} {2}\nabla \cdot (\nabla  \GVar  |\nabla u|^2) + \Re \langle H( \GVar ) \nabla \overline{u},\nabla u \rangle 
-\frac {1} {2} \Delta  \GVar  |\nabla u|^2,
\end{gather*} 
 we see that
\begin{multline} \label{fVarcg} 
- \Re \int _{ \R^N  } [  \Delta  \GVar     \overline{u}  + 2   \nabla  \GVar  \cdot \nabla   \overline{u}   ] e^{-i \theta }u_t =
-\Re \int _{ \R^N  } [  \Delta  \GVar     \overline{u}  + 2   \nabla  \GVar  \cdot \nabla   \overline{u}   ] (\Delta u+  |u|^\alpha u) \\  = -\frac {1}{2} \int _{ \R^N  } \Delta^2  \GVar  |u|^2 - \frac {\alpha}{\alpha+2} \int _{ \R^N  } \Delta  \GVar  |u|^{\alpha +2} 
  +2\Re \int _{ \R^N  } \langle H( \GVar ) \nabla \overline{u},\nabla u\rangle .
\end{multline}  
We now deduce from~\eqref{fVarqg} and~\eqref{fVarcg}  that
\begin{multline} \label{fVarsg} 
\frac {d} {dt}  \Bigl(    \sin \theta  \Im \int _{ \R^N  } \nabla  \GVar  \overline{u} \nabla u  \Bigr)  = \\
 -\frac {1}{2} \int _{ \R^N  } \Delta^2  \GVar  |u|^2 - \frac {\alpha}{\alpha+2} \int _{ \R^N  } \Delta  \GVar  |u|^{\alpha +2} 
+2\Re \int _{ \R^N  } \langle H( \GVar ) \nabla \overline{u},\nabla u\rangle \\ 
+ \cos \theta \Re \int _{ \R^N  } [ \Delta  \GVar  \overline{u} +2   \nabla  \GVar  \cdot \nabla  \overline{u} ] u_t . 
\end{multline} 
Note that
\begin{equation*}
\begin{split} 
\frac {d} {dt}\int _{ \R^N  }  \GVar   \Bigl( \frac { |\nabla u|^2} {2} - \frac { |u|^{\alpha +2}} {\alpha +2} \Bigr) &=
\Re \int  _{ \R^N  } \GVar  (\nabla  \overline{u} \cdot \nabla u_t -  |u|^\alpha  \overline{u} u_t  ) \\ & =
- \Re \int _{ \R^N  } [ (  \GVar  (\Delta  \overline{u} + |u|^\alpha  \overline{u} ) u_t  ) +  (\nabla  \GVar \cdot \nabla  \overline{u}) u_t \\ & = -\cos \theta \int  _{ \R^N  } \GVar   |u_t|^2 -\Re \int _{ \R^N  } (\nabla  \GVar  \cdot \nabla  \overline{u})u_t,
\end{split} 
\end{equation*} 
so that
\begin{equation}  \label{fVaruug} 
2 \Re \int _{ \R^N  } (\nabla  \GVar  \cdot \nabla  \overline{u})u_t= -2 \cos \theta \int  _{ \R^N  } \GVar   |u_t|^2
- \frac {d} {dt}\int  _{ \R^N  }  \Bigl(  \GVar  |\nabla u|^2 - \frac { 2} {\alpha +2}  \GVar   |u|^{\alpha +2} \Bigr) .
\end{equation} 
Moreover,
\begin{equation} \label{fVaruzg} 
 \Re \int _{ \R^N  } \Delta  \GVar  \overline{u} u_t=  \frac {d} {dt} \frac {1}{2}\int _{ \R^N  } \Delta  \GVar  |u|^2. 
\end{equation} 
We deduce from~\eqref{fVarsg}, \eqref{fVaruug}  and~\eqref{fVaruzg} that
\begin{multline} \label{fVarutg} 
\frac {d} {dt}  \Bigl(    \sin \theta  \Im \int _{ \R^N  } \nabla  \GVar  \overline{u} \nabla u  \Bigr)  = \\
-\frac {1}{2} \int _{ \R^N  } \Delta^2  \GVar  |u|^2 - \frac {\alpha}{\alpha+2} \int _{ \R^N  } \Delta  \GVar  |u|^{\alpha +2} 
 +2\Re \int _{ \R^N  } \langle H( \GVar ) \nabla \overline{u},\nabla u\rangle \\  
+ \cos \theta  \frac {d} {dt}   \int   _{ \R^N  } \Bigl( - \GVar  |\nabla u|^2 + \frac { 2} {\alpha +2}  \GVar   |u|^{\alpha +2} + \frac {1}{2} \Delta  \GVar  |u|^2\Bigr)  
\\ -2 \cos^2 \theta \int _{ \R^N  }  \GVar   |u_t|^2. 
\end{multline} 
Taking now the time-derivative of~\eqref{fVarug} and applying~\eqref{fVarutg}, we obtain~\eqref{fVaruqg}. 
\end{proof}

The next tool we use for the proof of Theorem~\ref{eGGLhbu} is the following estimate. It says that the maximal existence time of a solution $u$ of~\eqref{GL} is controlled, independently of $\theta $, by the maximal time until which $ \| u(t) \| _{ L^2 }$ remains bounded by a (fixed) multiple of $ \| \DI \| _{ L^2 }$.

\begin{lem}\label{eGGLq} 
Suppose~\eqref{fGGLuA}, let $\DI \in \Cz \cap H^1 (\R^N ) $ and consider the 
corresponding maximal solution
 $u\in C([0, \Tma), \Cz \cap H^1 (\R^N )  )$
 of~\eqref{GL}.
 Set
\begin{equation} \label{feGGLq:u} 
\tau = \sup \{ t\in [0,\Tma);\,  \|u(s)\| _{ L^2 }^2 \le K  \| \DI \| _{ L^2 }^2  \text{ for }0\le s\le t  \},
\end{equation} 
where
\begin{equation} \label{feGGLq:d} 
K=  \Bigl[ 1 -  \Bigl( \frac {\alpha +4} {2\alpha +4} \Bigr)^{\frac 12} \Bigr] ^{-1} >1,
\end{equation} 
so that $0\le \tau \le \Tma$. 
 If $E(\DI) \le 0$, then $\Tma \le  \frac {\alpha +4} {\alpha } \tau  $.
\end{lem}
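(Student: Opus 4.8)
The plan is to reduce the statement to the negative‑energy blow‑up estimate of Theorem~\ref{eGGLt}, applied not at $t=0$ but at $t=\tau$. Write $f(t)=\|u(t)\|_{L^2}^2$, $e(t)=E(u(t))$ and $D(t)=\int_0^t\int_{\R^N}|u_t|^2$, so that $e(t)=E(\DI)-\cos\theta\,D(t)$ by \eqref{fGGLq}. First I would record that $f$ is nondecreasing: since $E(\DI)\le0$ and $\cos\theta>0$, every term on the right of \eqref{fGGLuub} is nonnegative, so $f'\ge 2(\alpha+2)\cos^2\theta\,D\ge0$. Hence the set in \eqref{feGGLq:u} is an interval $[0,\tau]$, and in the only nontrivial case $\tau<\Tma$ continuity forces $f(\tau)=K\|\DI\|_{L^2}^2$ (if $\tau=\Tma$ the conclusion is immediate because $\tfrac{\alpha+4}{\alpha}>1$). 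Since $K>1$ we have $\tau>0$, and as $f$ is not constant on $[0,\tau]$ the solution is nonstationary there, whence $D(\tau)>0$ and $e(\tau)=E(\DI)-\cos\theta\,D(\tau)<0$.

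The first substantive step is a quantitative lower bound on the energy $-e(\tau)$ dissipated by time $\tau$. Reusing the Cauchy--Schwarz computation \eqref{fHWt} together with \eqref{fGGLsbu} and \eqref{fGGLs} (so that $|\int u_t\overline u|^2=I(u)^2=(f')^2/4\cos^2\theta$), one obtains the pointwise dissipation inequality $-\tfrac{de}{dt}\ge\tfrac{(f')^2}{4\cos\theta\,f}$. On $[0,\tau]$ I may bound $f\le K\|\DI\|_{L^2}^2$ in the denominator and then use Cauchy--Schwarz in time, $\int_0^\tau(f')^2\ge\tau^{-1}\big(\int_0^\tau f'\big)^2=\tau^{-1}(K-1)^2\|\DI\|_{L^2}^4$; integrating and discarding the nonnegative term $-e(0)=-E(\DI)$ yields
\[
-e(\tau)\ \ge\ \frac{(K-1)^2\,\|\DI\|_{L^2}^2}{4K\,\cos\theta\,\tau}.
\]

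Next I would restart Theorem~\ref{eGGLt} at time $\tau$. As \eqref{GL} is autonomous, $s\mapsto u(\tau+s)$ is the maximal solution with initial value $u(\tau)\in\Cz\cap H^1(\R^N)$ and maximal existence time $\Tma-\tau$; since $E(u(\tau))=e(\tau)<0$, estimate \eqref{feGGLud:s} applies and gives $\Tma-\tau\le\frac{f(\tau)}{\alpha(\alpha+2)(-e(\tau))\cos\theta}$. Substituting $f(\tau)=K\|\DI\|_{L^2}^2$ and the bound just obtained, the factors $\cos\theta$ and $\|\DI\|_{L^2}^2$ cancel and
\[
\Tma-\tau\ \le\ \frac{4K^2}{\alpha(\alpha+2)(K-1)^2}\,\tau .
\]
Finally I insert the value of $K$: by \eqref{feGGLq:d} one has $\big(\tfrac{K-1}{K}\big)^2=\tfrac{\alpha+4}{2\alpha+4}$, i.e. $(\alpha+2)(K-1)^2=\tfrac{\alpha+4}{2}K^2\ge K^2$, so the coefficient above is at most $\tfrac{4}{\alpha}$ and therefore $\Tma\le\tau+\tfrac{4}{\alpha}\tau=\tfrac{\alpha+4}{\alpha}\tau$.

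I expect the genuine obstacle to be the first step, not the second. The mechanism of Theorem~\ref{eGGLt} degenerates as the energy tends to $0$, and here $E(\DI)$ is only assumed $\le0$, so one cannot start at $t=0$; the whole purpose of $\tau$ and of the threshold $K$ is to guarantee that a definite, quantitatively controlled amount of energy $-e(\tau)>0$ has been dissipated before restarting. Producing this lower bound \emph{uniformly in $\theta$} is the crux, and it is precisely the cancellation of $\cos\theta$ between the dissipation estimate and \eqref{feGGLud:s} that makes the final bound $\Tma\le\tfrac{\alpha+4}{\alpha}\tau$ independent of $\theta$, as needed in Theorem~\ref{eGGLhbu}. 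One should also verify the minor regularity point that $t\mapsto e(t)$ is absolutely continuous with $e'=-\cos\theta\|u_t\|_{L^2}^2$, which is already contained in \eqref{fGGLq}.
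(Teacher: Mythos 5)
Your proof is correct, but it takes a genuinely different route from the paper's. The paper never restarts the solution nor re-invokes Theorem~\ref{eGGLt}: it runs Levine's concavity argument directly on $[\tau,\Tma)$, setting $h(t)=\int_0^t\int_{\R^N}|u|^2$ and combining \eqref{fGGLuub}, \eqref{fGGLsbu}, \eqref{fGGLs} with Cauchy--Schwarz to get $hh''\ge\frac{\alpha+2}{2}\,(h'(t)-h'(0))^2$; the mass lower bound $\|u(t)\|_{L^2}^2\ge K\|\DI\|_{L^2}^2$ for $t\ge\tau$ (the same monotonicity observation you make) turns this into $hh''\ge\frac{\alpha+2}{2}\bigl(\frac{K-1}{K}\bigr)^2(h')^2=\frac{\alpha+4}{4}(h')^2$, i.e. $(h^{-\alpha/4})''\le 0$, and concavity plus positivity of $h^{-\alpha/4}$ forces $\Tma-\tau\le\frac{4}{\alpha}\tau$. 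Thus in the paper the value of $K$ is tuned to produce exactly the concavity exponent $\alpha/4$, all the quantitative work happens \emph{after} $\tau$, and the energy enters only through $I(u)\le(\alpha+2)E(u)\le 0$. You instead do the quantitative work \emph{before} $\tau$ --- the dissipation bound $-e(\tau)\ge\frac{(K-1)^2\|\DI\|_{L^2}^2}{4K\cos\theta\,\tau}$, obtained from the pointwise inequality $-e'\ge\frac{(f')^2}{4\cos\theta f}$ and Cauchy--Schwarz in time, which is an observation not present in the paper --- and then treat Theorem~\ref{eGGLt} as a black box for the solution restarted at $\tau$, relying on the cancellation of $\cos\theta$ between the two estimates. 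Your route is more modular, explains conceptually what $K$ is for (it guarantees a definite amount of dissipated energy before restarting), and in fact yields the slightly sharper constant $\Tma\le\bigl(1+\frac{8}{\alpha(\alpha+4)}\bigr)\tau$; the paper's route is self-contained within the Levine machinery and avoids the (routine, but worth stating explicitly) uniqueness/autonomy argument needed to identify $s\mapsto u(\tau+s)$ with the maximal solution issued from $u(\tau)$. Both arguments deliver the $\theta$-uniform bound that Theorem~\ref{eGGLhbu} requires.
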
 

\begin{proof} 
If $\tau =\Tma$, there is nothing to prove, so we now assume $\tau <\Tma$, so that 
\begin{equation} \label{feGGLq:t} 
  \|u(t)\| _{ L^2 }^2 \le \|u(\tau )\| _{ L^2 }^2= K  \| \DI \| _{ L^2 }^2,\quad 0\le t\le \tau .
\end{equation} 
Since $E( \DI) \le 0$, it follows from~\eqref{fGGLn} that the map $t \mapsto  \|u(t)\| _{ L^2 }$ is nondecreasing on $[0, \Tma)$; and so, using~\eqref{feGGLq:t} 
\begin{equation} \label{feGGLq:q} 
 \|u(t )\| _{ L^2 }^2 \ge  K  \| \DI \| _{ L^2 }^2,\quad \tau \le t<\Tma. 
\end{equation} 
We now use calculations based on Levine~\cite{Levine}.
We deduce from~\eqref{fGGLuub} that
\begin{equation} \label{fGGLud}
\frac {d} {dt} \int _{\R^N }  |u|^2\ge   
2(\alpha +2) \cos^2 \theta  \int _0^t \int _{\R^N }  |u_t|^2.
\end{equation}
Set 
\begin{equation} \label{fGGLut}
h(t)= \int _0^t \int _{\R^N }  |u|^2.
\end{equation} 
It follows from~\eqref{fGGLud} and the Cauchy-Schwarz inequality that
\begin{equation}  \label{fGGLuq}
\begin{split} 
[2(\alpha +2) \cos^2 \theta]^{-1} h h'' & \ge  h \int _0^t \int _{\R^N }  |u_t|^2 
\ge  \Bigl( \int _0^t \int _{\R^N }  |u|\, |u_t| \Bigr)^2 \\ &
\ge  \Bigl( \int _0^t \Bigl| \int _{\R^N } u_t  \overline{u}  \Bigr| \Bigr)^2
\end{split} 
\end{equation} 
Since $I(u(t)) \le (\alpha +2) E(u(t))\le 0$ by~\eqref{fEstIE}, 
identities~\eqref{fGGLsbu} and~\eqref{fGGLs} yield
\begin{equation}  \label{fGGLuqbu}
\Bigl| \int _{\R^N } u_t  \overline{u}  \Bigr| 
=  \frac {1} {2\cos \theta } \frac {d} {dt}\int _{\R^N }  |u|^2
=  \frac {1} {2\cos \theta } h'' (t) .
\end{equation} 
We deduce from~\eqref{fGGLuq} and~\eqref{fGGLuqbu}  that
\begin{equation} \label{fGGLuqbd}
h h'' \ge \frac {\alpha +2} {2} (h'(t) -h'(0))^2.
\end{equation} 
It follows from~\eqref{fGGLuqbd} and~\eqref{feGGLq:q}  that
\begin{equation} \label{feGGLq:c}
h h'' \ge \frac{\alpha +2} {2}  \Bigl( \frac {K-1} {K}  \Bigr)^2 [h'(t)]^2
= \frac{\alpha +4} {4} [h'(t)]^2,
\end{equation} 
for all $\tau \le t< \Tma$. This means that $(h^{-\frac {\alpha } {4} }) '' \le 0$ on $[\tau ,\Tma)$; and so
\begin{equation*} 
h(t)^{-\frac {\alpha } {4}} \le h(\tau )^{-\frac {\alpha } {4}} + (t-\tau ) (h^{-\frac {\alpha } {4}})'(\tau )
= h(\tau )^{-\frac {\alpha } {4}}  \Bigl[ 1- \frac {\alpha } {4}(t-\tau ) h(\tau )^{-1} h'(\tau ) \Bigr],
\end{equation*} 
for $\tau \le t\le \Tma$. Since $h(t)^{-\frac {\alpha } {4}}\ge 0$, we deduce that for every $\tau \le t<\Tma$,
\begin{equation*} 
 \frac {\alpha } {4}(t-\tau ) h(\tau )^{-1} h'(\tau ) \le 1,
\end{equation*} 
i.e. 
\begin{equation} 
(t-\tau )  \|u(\tau )\| _{ L^2 }^2 \le \frac {4} {\alpha } \int _0^\tau   \|u(s)\| _{ L^2 }^2 ds \le  \frac {4} {\alpha } \tau  \|u(\tau )\| _{ L^2 }^2,
\end{equation} 
where we used~\eqref{feGGLq:t} in the last inequality.
Thus $t\le \frac {\alpha +4} {\alpha } \tau $ for all $\tau \le t< \Tma$, which proves the desired inequality.
\end{proof} 

The last ingredient we use in the proof of Theorem~\ref{eGGLhbu} is Lemma~\ref{eOzau} below.
It is an estimate, based on Ogawa and Tsutsumi~\cite{OgawaTu}, which enables us to choose an appropriate function $\Psi $ in Lemma~\ref{eGGLuu}. 
Unfortunately, we have only been able to accomplish this in the radially symmetric case. 
In other words, we are only able to construct a function $\Psi $ for which we can estimate the right-hand side of~\eqref{fVaruqg} for radially symmetric functions $u$.
 
Before stating this result, we rewrite formula~\eqref{fVaruqg} for radially 
 symmetric $\Psi $ and $u$. 
Consider a real-valued function $\GVar \in C^\infty (\R^N ) \cap W^{4, \infty }  (\R^N ) $ as in Lemma~\ref{eGGLuu}, and assume further that $\GVar $ is radially symmetric. 
It follows that 
\begin{equation*} 
 \partial ^2_{j k} \GVar = \frac {\delta  _{ jk }} {r} \GVar '  - \frac {x_j x_k} {r^3}  \GVar ' + \frac {x_j x_k} {r^2}  \GVar '' , 
\end{equation*} 
 so that
 \begin{equation} \label{fYMu} 
\begin{split} 
 \Re \langle H( \GVar ) \nabla \overline{u},\nabla u\rangle & = \frac {\GVar '} {r}  |\nabla u|^2
 -  \Bigl(  \frac {\GVar '} {r^3}- \frac {\GVar ''} {r^2} \Bigr)  |x\cdot \nabla u|^2 \\
 & = \frac {\GVar '} {r}  |\nabla u|^2
 -  \Bigl(  \frac {\GVar '} {r}-  {\GVar ''}  \Bigr)  |\partial _r u|^2 .
\end{split} 
 \end{equation} 
 If, in addition,  $u$ is radially symmetric, then~\eqref{fYMu} yields 
  \begin{equation} \label{fYMubu} 
 \Re \langle H( \GVar ) \nabla \overline{u},\nabla u\rangle =  \GVar '' |u_r|^2 .
 \end{equation} 
It follows from~\eqref{fVaruqg} and~\eqref{fYMubu} that 
if both $u$ and $\GVar$ are radially symmetric, then
\begin{multline} \label{fVaruqs} 
\frac {1} {2}\frac {d^2} {dt^2}\int  _{ \R^N  } \GVar  |u|^2 =2N\alpha  E(u(t)) - (N\alpha -4)  \int _{ \R^N  } | u_r|^2 
 -2  \int _{ \R^N  } (2 - \GVar '' ) |u_r|^2 \\
 + \frac {\alpha  } {\alpha +2}\int _{ \R^N  } (2N-  \Delta  \GVar)  |u|^{\alpha +2}
  -\frac {1}{2} \int _{ \R^N  } \Delta^2  \GVar  |u|^2 \\ 
+ \cos \theta  \frac {d} {dt}   \int _{ \R^N  } \Bigl\{ - 2  \GVar   | \nabla u|^2+ \frac {\alpha +4} {\alpha +2}   \GVar   |u|^{\alpha +2}
+ \Delta  \GVar   |u|^2 \Bigr\}  
\\ -2 \cos^2 \theta \int _{ \R^N  }  \GVar   |u_t|^2. 
\end{multline} 
Since $\Psi (x)$ is radially symmetric, by abuse of notation, we often write $\Psi (x)= \Psi (r)$, where
$r =  |x|$. Using this notation, we have $\Delta \Psi (x) = \Psi ''(r) + \frac {N-1} {r} \Psi '(r)$. We hope the reader will forgive our using both notations in the same formula, as we did in~\eqref{fVaruqs}. 

We now state the needed estimate. Since the proof is an adaptation of arguments in~\cite{OgawaTu} and is somewhat technical, it is given in the appendix~\ref{sProofLemmaeOzau} to this paper. 

\begin{lem} \label{eOzau} 
Suppose $N\ge 2$ and $\alpha \le 4$.
Given any $0<a,A<\infty $, there exists a radially symmetric function $\GVar \in C^\infty (\R^N ) \cap W^{4,\infty } (\R^N ) $, such that $\GVar (x)>0$ for $x\not = 0$ and
\begin{equation} \label{eOzau:u} 
 -2  \int _{ \R^N  } (2 - \GVar '' ) |u_r|^2 \\
 + \frac {\alpha  } {\alpha +2}\int _{ \R^N  } (2N-  \Delta  \GVar)  |u|^{\alpha +2}
  -\frac {1}{2} \int _{ \R^N  } \Delta^2  \GVar  |u|^2\le a,
\end{equation} 
for all radially symmetric $u\in H^1 (\R^N ) $ such that $ \|u\| _{ L^2 }\le A$.
\end{lem}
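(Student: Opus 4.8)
The plan is to take $\GVar$ of the rescaled form $\GVar(x)=R^{2}\psi(x/R)$, where $\psi$ is a fixed radially symmetric profile with $\psi(y)=|y|^{2}$ for $|y|\le 1$, bounded together with its derivatives up to order four, positive for $y\neq0$, and satisfying $\psi''\le 2$; the scale $R=R(a,A)>0$ is to be fixed (large) at the end. Since $\GVar(x)=|x|^{2}$ on $\{|x|\le R\}$, there one has $\GVar''=2$, $\Delta\GVar=2N$ and $\Delta^{2}\GVar=0$, so all three integrands in~\eqref{eOzau:u} are supported in the exterior region $\{|x|\ge R\}$, and the rescaling keeps $\GVar''\le 2$ (making the first term nonpositive) while $\|\Delta^{2}\GVar\|_{L^\infty}=R^{-2}\|\Delta^{2}\psi\|_{L^\infty}$.

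With this reduction the biharmonic term is immediate: since $\|u\|_{L^{2}}\le A$,
\[
-\frac12\int_{\R^{N}}\Delta^{2}\GVar\,|u|^{2}\le \tfrac12 R^{-2}\|\Delta^{2}\psi\|_{L^\infty}A^{2},
\]
which tends to $0$ as $R\to\infty$. The heart of the matter is the nonlinear term $\frac{\alpha}{\alpha+2}\int(2N-\Delta\GVar)|u|^{\alpha+2}$, whose integrand is again supported in $\{|x|\ge R\}$. Here I would invoke the radial decay available for $N\ge2$: writing $|u(r)|^{2}=-2\int_{r}^{\infty}\re(\overline{u}\,u_{s})\,ds$ and using $r^{N-1}\le s^{N-1}$ together with the Cauchy--Schwarz inequality gives, for radial $u$, the pointwise bound $r^{N-1}|u(r)|^{2}\le C\|u\|_{L^{2}}\|u_{r}\|_{L^{2}}$. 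Substituting this into $\int_{|x|\ge R}|u|^{\alpha+2}$ and using $\|u\|_{L^{2}}\le A$ yields a bound of the shape
\[
\frac{\alpha}{\alpha+2}\int_{\R^{N}}(2N-\Delta\GVar)|u|^{\alpha+2}\le C\,R^{-\frac{(N-1)\alpha}{2}}A^{2+\frac{\alpha}{2}}\Bigl(\int_{|x|\ge R}|u_{r}|^{2}\Bigr)^{\frac{\alpha}{4}}.
\]

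Because $\alpha\le4$, the exponent $\alpha/4\le1$ on the gradient integral, so Young's inequality splits off an arbitrarily small multiple of the gradient plus a constant depending only on $N,\alpha,A$ and $R$ that carries a negative power of $R$. The small gradient multiple is meant to be absorbed into the nonpositive first term $-2\int(2-\GVar'')|u_{r}|^{2}$, and taking $R$ large then kills both the biharmonic term and the Young constant, giving the bound $\le a$. The delicate point — and the reason the argument is technical and deferred to the appendix — is that both weights $2-\GVar''$ and $2N-\Delta\GVar$ vanish at $r=R$, precisely where the nonlinear integrand is concentrated, so the absorption cannot be carried out by discarding the weight: one must retain $2-\GVar''$ on both sides and design $\psi$ so that, after the radial estimate, the positive nonlinear contribution is pointwise dominated by $-2(2-\GVar'')|u_{r}|^{2}$. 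Arranging compatible vanishing rates of $2N-\Delta\GVar$ and $2-\GVar''$ at $r=R$ is exactly the step, following~\cite{OgawaTu}, in which both hypotheses $N\ge2$ (radial decay) and $\alpha\le4$ (Young) are indispensable.
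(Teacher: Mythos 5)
Your construction of $\GVar$ (quadratic inside a large ball, constant far outside, rescaled) and your treatment of the biharmonic term coincide with the paper's. But the argument you actually display for the nonlinear term does not close, and you concede as much in your final paragraph: the unweighted radial estimate $r^{N-1}|u(r)|^2\le C\|u\|_{L^2}\|u_r\|_{L^2}$ followed by Young's inequality produces a term $\delta\int_{\{|x|\ge R\}}|u_r|^2$ with an \emph{unweighted} gradient, and this cannot be absorbed into $-2\int_{\R^N}(2-\GVar'')|u_r|^2$, because $2-\GVar''$ vanishes at $|x|=R$: for radial $u$ whose gradient concentrates near that sphere, $\delta\int_{\{|x|\ge R\}}|u_r|^2$ is arbitrarily large while $\int(2-\GVar'')|u_r|^2$ stays negligible, and $\int|u_r|^2$ is not controlled by the constraint $\|u\|_{L^2}\le A$. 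Saying that one must ``retain the weight and arrange compatible vanishing rates of $2N-\Delta\GVar$ and $2-\GVar''$'' names this difficulty but does not resolve it; that resolution is precisely the content of the lemma, so what you have is a genuine gap, not a sketch of a proof.

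Concretely, the paper closes the gap with three ingredients absent from your outline. First, taking $\GVar_\varepsilon(x)=\varepsilon^{-2}\zeta(\varepsilon^2|x|^2)$ with $\zeta$ \emph{concave}, one gets the exact identity $2-\GVar_\varepsilon''=\gamma_\varepsilon^2$ and the pointwise bound $2N-\Delta\GVar_\varepsilon\le N\gamma_\varepsilon^2$ (see~\eqref{fDefHeps} and~\eqref{fGVlap}), where $\gamma_\varepsilon(r)=\gamma(\varepsilon r)$ is bounded and supported in $\{r\ge \varepsilon^{-1}\}$; thus the nonlinear term and the gradient term carry the \emph{same} weight. Second, the Strauss-type estimate is proved with the weight inside, $\|\gamma_\varepsilon^{1/2}u\|_{L^\infty}^2\le \varepsilon^N\|\gamma'\|_{L^\infty}\|u\|_{L^2}^2+2\varepsilon^{N-1}\|u\|_{L^2}\|\gamma_\varepsilon u_r\|_{L^2}$ (see~\eqref{fCLu}); this is where $N\ge2$ enters, via $r^{-(N-1)}\le\varepsilon^{N-1}$ on the support of $\gamma_\varepsilon$. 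Third, $\alpha\le4$ is used to factor $\gamma_\varepsilon^2|u|^{\alpha+2}=\gamma_\varepsilon^{(4-\alpha)/2}\bigl(\gamma_\varepsilon^{1/2}|u|\bigr)^{\alpha}|u|^2$ with a nonnegative exponent on the first factor, and again in the inequality $x^{\alpha/2}\le 1+x^2$; after these steps the gradient reappears as $C\varepsilon^{(N-1)\alpha/2}A^{\alpha/2+2}\|\gamma_\varepsilon u_r\|_{L^2}^2$, i.e., with a small coefficient \emph{and} the correct weight $\gamma_\varepsilon^2$, and is absorbed into $-2\int\gamma_\varepsilon^2|u_r|^2$ by taking $\varepsilon$ small. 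Note also that the absorption is an integral estimate through the weighted sup bound, not the pointwise domination you propose. Without these weighted estimates your outline stops exactly where the lemma's real work begins.
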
 

\begin{proof}[Proof of Theorem~\ref{eGGLhbu}]
We let $K$ be defined by~\eqref{feGGLq:d} and we set 
\begin{equation} \label{fPRFu} 
\tau ^\theta = \sup \{ t\in [0,\Tma^\theta );\,  \|u^\theta (s)\| _{ L^2 }^2 \le K  \| \DI \| _{ L^2 }^2  \text{ for }0\le s\le t  \},
\end{equation} 
so that
\begin{equation}  \label{fPRFd} 
\sup  _{ 0\le \theta <\frac {\pi } {2} }\sup  _{ 0\le t<\tau ^\theta  }  \|u^\theta (t)\| _{ L^2 }^2 \le K \| \DI \| _{ L^2 }^2.
\end{equation} 
It follows from Lemma~\ref{eGGLq} that
\begin{equation}  \label{fPRFt} 
\Tma ^\theta  \le  \frac {\alpha +4} {\alpha } \tau ^\theta  .
\end{equation} 
We now let $\GVar$ be given by Lemma~\ref{eOzau} with
\begin{equation} \label{fPRFq} 
A= \sqrt K  \| \DI \| _{ L^2 } ,\quad a= - N\alpha E(\DI).
\end{equation} 
Since $E(u^\theta (t))\le E(\DI)$ it follows from~\eqref{fVaruqs}, \eqref{eOzau:u} and \eqref{fPRFq} that
 \begin{multline} \label{fPRFc} 
\frac {1} {2}\frac {d^2} {dt^2}\int _{ \R^N  }  \GVar    |u^\theta |^2 \le  N\alpha  E(\DI )   \\ 
+ \cos \theta  \frac {d} {dt}   \int _{ \R^N  }  \Bigl\{ - 2  \GVar     | \nabla u^\theta  |^2+ \frac {\alpha +4} {\alpha +2}   \GVar     |u^\theta |^{\alpha +2}
+ \Delta  \GVar     |u^\theta |^2 \Bigr\} ,
\end{multline} 
for all $0\le t< \tau ^\theta $. Let
\begin{equation} \label{fPRFcbd} 
B= \int _{ \R^N  } \Bigl\{ - 2  \GVar     | \nabla \DI  |^2+ \frac {\alpha +4} {\alpha +2}   \GVar     |\DI|^{\alpha +2}
+ \Delta  \GVar     |\DI|^2 \Bigr\},
\end{equation} 
and
\begin{multline} \label{fPRFcbu} 
\Gamma _\theta  = \cos \theta  \Bigl( -\int  _{ \R^N  } \GVar   |\nabla \DI|^2 + \int _{ \R^N  }  \GVar   |\DI |^{\alpha +2} + \frac 12 \int _{ \R^N  } \Delta  \GVar  |\DI |^2 \Bigr) \\ + \sin \theta  \Im \int _{ \R^N  } \nabla  \GVar  \overline{\DI } \nabla \DI.
\end{multline} 
Integrating twice the inequality~\eqref{fPRFc} and applying~\eqref{fPRFcbd}-\eqref{fPRFcbu} and~\eqref{fVarug}, we deduce that
 \begin{multline} \label{fPRFs} 
\frac {1} {2} \int  _{ \R^N  } \GVar    |u^\theta |^2 \le \frac {1} {2} \int  _{ \R^N  } \GVar    |\DI |^2 + t  
\Gamma _\theta +   N\alpha  E(\DI )  \frac {t^2} {2} \\ 
+ \cos \theta \int _0^t   \int _{ \R^N  } \Bigl\{ - 2  \GVar     | u^\theta _r |^2+ \frac {\alpha +4} {\alpha +2}   \GVar     |u^\theta |^{\alpha +2}
+ \Delta  \GVar     |u^\theta |^2 \Bigr\}   - B  t  \cos \theta  .
\end{multline} 
On the other hand, it follows from~\eqref{fGGLn} that
\begin{equation*}
\frac {d} {dt} \int _{ \R^N  } |u^\theta |^2 \ge   2\cos \theta  \frac {\alpha } {\alpha + 2}\int _{ \R^N  }  |u^\theta |^{\alpha +2}.
\end{equation*} 
Integrating between $0$ and $t\in (0,\tau ^\theta )$, we obtain
\begin{equation} \label{fPRFh}
2 \cos \theta  \int _0^t \int  _{ \R^N  }  |u^\theta |^{\alpha +2}\le \frac {\alpha +2} {\alpha } [  \|u^\theta (t)\| _{ L^2 }^2 - 
 \|  \DI \| _{ L^2 }^2] \le \frac {\alpha +2} {\alpha } (K-1)  \|  \DI \| _{ L^2 }^2,
\end{equation} 
where we used~\eqref{fPRFd} in the last inequality.
Since $\Psi \in W^{4, \infty }(\R^N )$, it
 now follows from~\eqref{fPRFs}, \eqref{fPRFh} and~\eqref{fPRFd}   that there exists a constant $C$ independent of  $\theta \in (- \pi /2, \pi /2)$ and $t\in (0, \tau ^\theta )$ such that
\begin{equation} \label{fUPTu} 
0\le  C + Ct +   N\alpha  E(\DI )  \frac {t^2} {2} ,
\end{equation} 
for all $0\le t<\tau ^\theta $.
Since $E(\DI) <0$, this implies that there exists $T<\infty $ such that $\tau ^\theta \le T$ for all $-\frac {\pi } {2}< \theta <\frac {\pi } {2}$, and the result follows by applying~\eqref{fPRFt}. 
\end{proof} 

\begin{rem}  \label{eGGLut} 
Suppose $N\ge 2$,  $\alpha <4/N$. Let $\DI \in \Cz \cap H^1 (\R^N ) $ be radially symmetric and satisfy $E( \DI ) <0$. Given $-\pi /2 <\theta <\pi /2$, let $u^\theta $ be the corresponding solution of~\eqref{GL} defined on the maximal interval $[0, \Tma ^\theta )$. It follows in particular from Theorem~\ref{eGGLt} that $u^\theta $ blows up in finite time. 
Using the calculations of the proof of Theorem~\ref{eGGLhbu}, one can improve the estimate~\eqref{feGGLud:s}. 
More precisely, taking into account the term $(4-N\alpha ) \int  _{ \R^N  } |u_r ^\theta |^2 $ in~\eqref{fVaruqs}, instead of~\eqref{fUPTu}, we obtain the inequality 
\begin{equation} \label{fUPTd} 
0\le  C + Ct+ (4-N\alpha ) \int _0^t \int _0^s  \int  _{ \R^N  }  |u_r^\theta |^2 +   N\alpha  E(\DI )  \frac {t^2} {2} ,
\end{equation} 
for all $-\pi /2<\theta <\pi /2$ and $0\le t<\tau ^\theta $.
On the other hand, it follows from~\eqref{fGGLuub} that
\begin{equation*}
\frac {d} {dt} \int _{ \R^N  } |u^\theta |^2 \ge   \alpha \cos \theta   \int _{ \R^N  }  |u_r^\theta |^2.
\end{equation*} 
Integrating between $0$ and $t\in (0,\tau ^\theta )$ and using~\eqref{fPRFd}, we obtain
\begin{equation} \label{fUPTt}
\alpha  \cos \theta  \int _0^t \int  _{ \R^N  }  |u_r^\theta |^2 \le [  \|u^\theta (t)\| _{ L^2 }^2 - 
 \|  \DI \| _{ L^2 }^2] \le (K-1)  \|  \DI \| _{ L^2 }^2.
\end{equation} 
It follows from~\eqref{fUPTd} and~\eqref{fUPTt} that  for some constant $C>0$
\begin{equation} \label{fUPTq} 
0\le  C + C  \Bigl( 1 + \frac {4-N\alpha } {\cos \theta } \Bigr) t +   N\alpha  E(\DI )  \frac {t^2} {2} ,
\end{equation} 
for all $-\pi /2<\theta <\pi /2$ and $0\le t<\tau ^\theta $, which yields the estimate
\begin{equation}  \label{fUPTc} 
\Tma ^\theta  \le C( \DI)  \Bigl( 1+ \frac {4-N\alpha } {\cos \theta } \Bigr).
\end{equation} 
This is interesting,  because we see the dependence in both $\theta $ and $\alpha $. 
It is optimal in $\theta $, but maybe not in $\alpha $. 
(Compare the lower estimate~\eqref{fLoweru}.) 
\end{rem} 

\section{Comments on the hypotheses of Theorem~$\ref{eGGLhbu}$} \label{sCmts} 

As observed above, the assumptions that $\DI $ is radially symmetric and that $\alpha \le 4$ in Theorem~\ref{eGGLhbu} may seem unnatural. 
In this section, we show that both these assumptions are necessary for the method we use. 
Indeed, our proof of Theorem~\ref{eGGLhbu} relies on the identity~\eqref{fVaruqg}.
Assuming that $\GVar\in W^{4, \infty } (\R^N ) \cap C^4(\R^N )$ is radially symmetric, it follows from~\eqref{fVaruqg} and~\eqref{fYMubu} that 
\begin{multline*} 
\frac {1} {2}\frac {d^2} {dt^2}\int  _{ \R^N  } \GVar  |u|^2 = 2N\alpha  E(u(t)) \\ - (N\alpha -4)  \int _{ \R^N  } | \nabla u|^2  + 2 \int  _{ \R^N  } \Bigl( \frac {\Psi '} {r} - \Psi ''\Bigr) ( |\nabla u|^2-  |u_r|^2)
 -2  \int _{ \R^N  } (2 - \GVar '' ) |\nabla u |^2 \\
 + \frac {\alpha  } {\alpha +2}\int _{ \R^N  } (2N-  \Delta  \GVar)  |u|^{\alpha +2}
  -\frac {1}{2} \int _{ \R^N  } \Delta^2  \GVar  |u|^2 
  \\  + \cos \theta  \frac {d} {dt}   \int _{ \R^N  } \Bigl\{ - 2  \GVar   | \nabla u|^2+ \frac {\alpha +4} {\alpha +2}   \GVar   |u|^{\alpha +2}
+ \Delta  \GVar   |u|^2 \Bigr\}   -2 \cos^2 \theta \int _{ \R^N  }  \GVar   |u_t|^2. 
\end{multline*} 
In order to complete our argument, we need at the very least an estimate of the form
\begin{multline} \label{fGenFu} 
- (N\alpha -4)  \int _{ \R^N  } | \nabla u|^2  + 2 \int  _{ \R^N  } \Bigl( \frac {\Psi '} {r} - \Psi ''\Bigr) ( |\nabla u|^2-  |u_r|^2) \\
 -2  \int _{ \R^N  } (2 - \GVar '' ) |\nabla u |^2 
 + \frac {\alpha  } {\alpha +2}\int _{ \R^N  } (2N-  \Delta  \GVar)  |u|^{\alpha +2}
\le F( \|u\| _{ L^2 }), 
\end{multline} 
where $F$ is bounded on bounded sets. Lemma~\ref{eOzau} provides such an estimate for radially symmetric $u$ under the assumption $\alpha \le 4$. 

We claim that if $N\alpha >4$, then there is no radially symmetric $\Psi \in C^4(\R^N ) \cap L^\infty  (\R^N ) $, $\Psi \ge 0$, such that the estimate~\eqref{fGenFu} holds for general $u$. 
To see this, fix $\varphi \in C^\infty _\Comp (\R^N )$, $\varphi \not \equiv 0$ and let 
\begin{equation}\label{fVaruqsbuu} 
u(x)=\lambda^{N/2}\varphi (\lambda (x-x_0)),
\end{equation} 
where $\lambda>0$ and $x_0\in \R^N$. It follows in particular that $\|u\|_{ L^2 }=\|\varphi\|_{ L^2 }$. Given $g\in C(\R^N)$ we have for $\lambda$ large 
\begin{gather}
\int _{ \R^N  } g(x)|\nabla u|^{2} \approx \lambda^{2}g(x_0)\int _{ \R^N  } |\nabla\varphi|^{2}\, dy, \label{fVaruqsbud} \\
\int _{ \R^N  } g(x)|u|^{\alpha+2} \approx \lambda^{N\alpha/2}g(x_0)\int _{ \R^N  } |\varphi|^{\alpha+2}\, dy, \label{fVaruqsbut} \\
\int _{ \R^N  } g(x)|\partial_r u|^{2} \approx \lambda^{2}g(x_0)\int _{ \R^N  } |\partial_r\varphi|^{2}\, dy.\label{fVaruqsbuc}
\end{gather} 
If $N\alpha>4$ and~\eqref{fGenFu} holds, then we deduce from~\eqref{fVaruqsbud}--\eqref{fVaruqsbuc} that $2N-\Delta \GVar(x_0)\le 0$ for all $x_0\in \R^N$, so that $\GVar\not\in L^\infty(\R^N)$. 

We now show that the assumption $\alpha \le 4$ is necessary in order that~\eqref{fGenFu} holds for some $\GVar\in W^{4, \infty } (\R^N ) \cap C^4(\R^N )$ and all radially symmetric $u$. To see this, fix $\varphi \in C^\infty([0,\infty)$ with $\Supp \varphi \subset [1,2 ]$ and $\varphi \not \equiv 0$. For $\lambda >0$ and $r_0>0$ consider 
\begin{equation} \label{fGGLuq:u}
u(x)=\lambda ^{1/2}r_0^{-(N-1)/2} \varphi (\lambda (r-r_0)).
\end{equation} 
Denote by $\omega _N$ the area of the unitary sphere of $\R^N$. 
It follows that for $\lambda \ge 2/r_0$,
\begin{equation} \label{fNL2} 
\begin{split} 
 \|u \| _{ L^2 }^2 
 &=  \omega _N \lambda r_0^{-N+1} \int _0^\infty   |\varphi (\lambda (r- r_0))|^2 r^{N-1}dr \\ 
 &=  \omega _N (\lambda r_0)^{-N+1} \int _1^2  |\varphi (r) |^2 (r+ \lambda r_0)^{N-1}dr \\ 
 &\le   \omega _N (\lambda r_0)^{-N+1} (2+\lambda r_0)^{N-1}  \| \varphi \| _{ L^2(\R) }^2 
 \le 2^{N-1} \omega _N  \| \varphi \| _{ L^2(\R) }^2. 
\end{split} 
\end{equation} 
Given a radially symmetric function $g\in C(\R^N)$ and $r_0>0$ such that $g(r_0)>0$, we have 
as $\lambda \to \infty $
\begin{equation} \label{fGGLuq:t}
\begin{split} 
\int  _{ \R^N } g(x) |u_r|^2 &= 
\omega _N\lambda ^{2} (\lambda r_0 )^{-N+1}\int _1^2 g(\lambda ^{-1} r+r_0) |\varphi '(r)|| ^{2}
 (r+ \lambda r_0) ^{N-1} dr
\\ & \approx \lambda ^{2} \omega _N  g(r_0)  \|\varphi '\| _{ L^2 (\R)}^2,
\end{split} 
\end{equation} 
and, similarly, 
\begin{multline} \label{fGGLuq:q}
\int  _{ \R^N } g(x) |u|^{\alpha +2}  =\omega _N \lambda ^{\frac {\alpha} {2}} r_0 ^{-\frac {(N-1)(\alpha+2)} {2}}\int  _1^2 g(\lambda ^{-1}r+r_0)|\varphi  (r )|^{\alpha+2} (\lambda ^{-1}r+  r_0) ^{N-1}dr \\
 \approx  \lambda ^{\frac {\alpha} {2}}   {\omega _Ng(r_0)} { r_0^{-\frac {(N-1)\alpha } {2}}}  \|\varphi \| _{ L^{\alpha +2 } (\R)}^{\alpha +2}.
\end{multline} 
If $\alpha>4$ and~\eqref{fGenFu} holds, then we deduce from~\eqref{fNL2}--\eqref{fGGLuq:q} that $2N-\Delta \GVar(r_0)\le 0$ for all $r_0 > 0$, so that $\GVar\not\in L^\infty(\R^N)$. 

\section{The variance identity and consequences} \label{Further} 

Another way one might try to dispense with the requirements in Theorem~\ref{eGGLhbu} that $\alpha \le 4$ and that $u_0$ be radially symmetric is to assume that $u_0$ has finite
variance.  
Indeed, finite time blowup of negative energy solutions of the nonlinear Schr\"o\-din\-ger equation, i.e.~\eqref{GL} with $\theta =\pm \pi /2$, was originally proved~\cite{Glassey, Zakharov} for finite variance solutions. No assumption of radial symmetry nor the upper bound  $\alpha \le 4$ was required. These conditions were introduced by Ogawa and Tsutsumi~\cite{OgawaTu}
in their proof of finite time blowup of negative energy solutions (with possibly infinite variance). 
Therefore, it is reasonable to hope that for~\eqref{GL} the additional assumption of finite variance could lead to a proof of finite time blowup without  the assumptions in~\cite{OgawaTu}. 

Consequently, we consider a finite variance solution of~\eqref{GL} which is sufficiently regular 
so that $\GVar=|x|^2$ can be used in formula~\eqref{fVaruqg}. 
This gives
\begin{multline} \label{fVaruqgBu} 
\frac {1} {2}\frac {d^2} {dt^2}\int  _{ \R^N  }  |x|^2  |u|^2 =
2N\alpha E(u(t))
 - (N\alpha - 4) \int _{ \R^N  }  |\nabla u|^2  \\
+ \cos \theta  \frac {d} {dt}   \int _{ \R^N  } \Bigl\{ - 2   |x|^2   |\nabla u|^2+ \frac {\alpha +4} {\alpha +2}    |x|^2    |u|^{\alpha +2}
+ 2N |u|^2 \Bigr\}  
\\ -2 \cos^2 \theta \int _{ \R^N  }   |x|^2   |u_t|^2.
\end{multline} 
These formal calculations can be justified by standard techniques assuming $\DI$ is sufficiently regular, and certainly if $\DI \in C^\infty _\Comp (\R^N )$.
We note right away that the three terms estimated in
Lemma~\ref{eOzau} have disappeared, and so this lemma is no longer needed.
We therefore proceed to outline a proof of the conclusion of Theorem~\ref{eGGLhbu}
based on the formula~\eqref{fVaruqgBu}.
Unfortunately, it will turn out that the conditions that $\alpha \le 4$ and that $\DI $ be radially symmetric will again be required, but for apparently different reasons than in the proof
of Lemma~\ref{eOzau}.

Consider, for simplicity, an initial value $\DI \in C^\infty _\Comp (\R^N )$.  
Suppose~\eqref{fGGLuA} and let $u^\theta $ be the corresponding solution of~\eqref{GL}, defined on the maximal interval $[0, \Tma^\theta )$.

Arguing as in the proof of Theorem~\ref{eGGLhbu} at the end of Section~\ref{Upper}, we obtain that for some $C_1>0$ independent of $\theta$  
\begin{multline} \label{feGGLh:q}
\int _{ \R^N  } |x|^2 |u^\theta |^2 \le \int _{ \R^N  } |x|^2 | \DI |^2
+ C_1 t + N\alpha E(\DI) t^2 \\ 
+ 2\cos \theta \int _0^t \int _{ \R^N  } \Bigl\{ - 2 |x|^2  |\nabla u^\theta |^2+ \frac {\alpha +4} {\alpha +2}  |x|^2  |u^\theta |^{\alpha +2} + 2N   |u^\theta |^2 \Bigr\}  ,
\end{multline} 
for all $0\le t<\Tma^\theta $, see~\eqref{fPRFcbu}, \eqref{fPRFcbd} and \eqref{fPRFs}.
For $K$ defined by~\eqref{feGGLq:d} set $C_2=4N K \| \DI \| _{ L^2 }^2$. If $\tau ^\theta$ is given by~\eqref{fPRFu} then 
\begin{equation} \label{feGGLh:pbu} 
 4N \cos \theta \int _0^t \int _{ \R^N  }   |u^\theta |^2 \le C_2 t,
\end{equation} 
for all $0\le t< \tau ^\theta $, see~\eqref{fPRFd}. Therefore, in order to obtain an inequality analogous to~\eqref{fUPTu}  it remains to estimate the term
\begin{equation} \label{feGGLh:pbd} 
2\cos \theta \int _0^t \int _{ \R^N  } \Bigl\{ - 2 |x|^2  |\nabla u^\theta |^2+ \frac {\alpha +4} {\alpha +2}  |x|^2  |u^\theta |^{\alpha +2}  \Bigr\}.
\end{equation} 
This can be done with the following estimate, similar to some results in~\cite{CaffarelliKN}.

\begin{lem}  \label{eGGLn} 
Suppose $N\ge 2$ and $4/N \le \alpha \le 4$. 
Given any $M>0$, there exists a constant $C$ such that 
\begin{equation}  \label{feGGLn:u} 
\int  |x|^2  |u|^{\alpha +2} \le \int  |x|^2  |\nabla u|^2 + C \int  |u|^{\alpha +2} +C,
\end{equation} 
for all smooth, radially symmetric  $u$ such that $ \|u\| _{ L^2 } \le M$. 
\end{lem}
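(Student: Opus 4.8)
The plan is to reduce the claim to a one–dimensional weighted inequality and then exploit the pointwise decay of radial functions. Writing $u=u(r)$ with $r=|x|$ and letting $\omega_N$ be the area of the unit sphere, the asserted inequality is equivalent, after dividing by $\omega_N$, to
\[
\int_0^\infty r^{N+1}|u|^{\alpha+2}\,dr \le \int_0^\infty r^{N+1}|u'|^2\,dr + C\int_0^\infty r^{N-1}|u|^{\alpha+2}\,dr + C,
\]
the delicate point being that the coefficient of the weighted Dirichlet integral $G\Deq\int_0^\infty r^{N+1}|u'|^2\,dr$ must be exactly $1$ (this is what makes the lemma usable in~\eqref{feGGLh:pbd}, where $\tfrac{\alpha+4}{\alpha+2}<2$). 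We may assume $G<\infty$, otherwise there is nothing to prove; in that case $u\in H^1((1,\infty))$ as a function of one variable, so $u(r)\to0$ as $r\to\infty$. First I would split the integral at a radius $R\ge 1$ to be chosen later. On $0<r\le R$ we have $r^{N+1}\le R^2 r^{N-1}$, so $\int_0^R r^{N+1}|u|^{\alpha+2}\,dr\le R^2\int_0^\infty r^{N-1}|u|^{\alpha+2}\,dr$ is absorbed into the term $C\int|u|^{\alpha+2}$.

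The substance lies in the region $r\ge R$, which I would control by a Strauss–type decay estimate. Starting from $|u(r)|^2=-2\int_r^\infty \re(\overline{u}u')\,ds$, Cauchy–Schwarz with the splitting $|u|\,|u'|=(s^{-(N+1)/2}|u|)(s^{(N+1)/2}|u'|)$, together with $\int_r^\infty s^{-(N+1)}|u|^2\,ds\le r^{-2N}\int_0^\infty s^{N-1}|u|^2\,ds\le C_M r^{-2N}$ where $C_M=\omega_N^{-1}M^2$, yields the pointwise bound $|u(r)|^2\le 2C_M^{1/2}G^{1/2}r^{-N}$. Raising this to the power $\alpha/2$ and inserting it into $\int_R^\infty r^{N+1}|u|^{\alpha+2}\,dr=\int_R^\infty r^{N+1}|u|^{\alpha}|u|^2\,dr$ gives
\[
\int_R^\infty r^{N+1}|u|^{\alpha+2}\,dr \le (2C_M^{1/2})^{\alpha/2}\,G^{\alpha/4}\int_R^\infty r^{2-\frac{N\alpha}{2}}\,r^{N-1}|u|^2\,dr.
\]
Here the hypothesis $\alpha\ge 4/N$ enters decisively: it forces $2-\tfrac{N\alpha}{2}\le 0$, so that $r^{2-N\alpha/2}\le R^{2-N\alpha/2}$ for $r\ge R$ and the remaining integral is bounded by $C_M R^{2-N\alpha/2}$. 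Thus $\int_R^\infty r^{N+1}|u|^{\alpha+2}\,dr\le D\,G^{\alpha/4}$ with $D=(2C_M^{1/2})^{\alpha/2}C_M R^{2-N\alpha/2}$.

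It then remains to absorb $D\,G^{\alpha/4}$ into $G$ with coefficient $1$, and this is precisely where $\alpha\le4$ (and, at the endpoint, $N\ge2$) is used. If $\alpha<4$, the exponent $\alpha/4<1$ lets me apply Young's inequality in the form $D\,G^{\alpha/4}\le G+C$, with coefficient exactly $1$ on $G$ (taking $R=1$, say). If $\alpha=4$, then $G^{\alpha/4}=G$ and I must instead arrange $D\le1$; since $N\ge2$ gives $2-\tfrac{N\alpha}{2}=2-2N<0$, the factor $R^{2-2N}\to0$ as $R\to\infty$, so choosing $R$ large forces $D\le1$ and hence $D\,G^{\alpha/4}\le G$. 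Combining the two regions yields the assertion. I expect the main obstacle to be exactly this final step—pinning the coefficient of $\int|x|^2|\nabla u|^2$ to equal $1$ rather than merely some constant—since it is what simultaneously reveals the roles of all three hypotheses; by contrast the decay estimate and the reduction to one variable are comparatively routine.
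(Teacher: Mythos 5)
Your proof is correct, and it rests on the same key estimate as the paper's, namely the radial decay bound~\eqref{feGGLp:u}, $r^N|u(r)|^2\le 2\|u\|_{L^2}\|\,|\cdot|\,\nabla u\|_{L^2}$, which you derive exactly as the paper does (integrating $\tfrac{d}{ds}|u(s)|^2$ over $(r,\infty)$ and applying Cauchy--Schwarz with the weights $s^{\pm(N+1)/2}$); but the two arguments deploy this estimate quite differently. The paper never splits the domain: it writes $|x|^2|u|^{\alpha+2}=\bigl(|x|^N|u|^2\bigr)^{2/N}|u|^{\alpha+2-4/N}$, so the decay bound enters with power $2/N$, then interpolates $\int|u|^{\alpha+2-4/N}$ between $\int|u|^2$ and $\int|u|^{\alpha+2}$ by H\"older --- this is where $\alpha\ge 4/N$ enters, ensuring $\alpha+2-4/N\ge 2$ --- arriving at the multiplicative inequality~\eqref{fSppl:d}; two applications of Young (one to detach $\int|u|^{\alpha+2}$, one to lower the exponent $\alpha/2$ of the gradient factor to $2$, which is where $\alpha\le 4$ enters) then give~\eqref{feGGLn:u}, with $N\ge 2$ needed at the endpoint $\alpha=4/N$, where the H\"older step degenerates and~\eqref{fSppl:d} reduces to~\eqref{fSppl:t}. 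You instead split at a radius $R$, absorb the inner region trivially via $r^{N+1}\le R^2r^{N-1}$, and on the outer region apply the decay bound with power $\alpha/2$, using $\alpha\ge 4/N$ for the boundedness of the tail weight $r^{2-N\alpha/2}$, and $\alpha\le 4$ (respectively $N\ge 2$, at the endpoint $\alpha=4$, to make $R^{2-2N}$ small) for the final absorption with coefficient exactly $1$. It is a curious structural difference that the two proofs invoke $N\ge2$ at opposite endpoints: the paper at $\alpha=4/N$, you at $\alpha=4$. Your route needs Young only once and makes the geometric role of each hypothesis transparent; the paper's avoids any domain decomposition and yields the clean intermediate inequality~\eqref{fSppl:d}, valid uniformly for $\alpha\ge4/N$, after which the case analysis is pure bookkeeping. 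One small point in your favor: you state explicitly that one may assume $\int|x|^2|\nabla u|^2<\infty$ and deduce $u(r)\to0$ from $u\in H^1((1,\infty))$, which justifies the boundary term in the integration over $(r,\infty)$; the paper handles this by a brief appeal to density.
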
 

\begin{proof} 
We first claim that
\begin{equation} \label{feGGLp:u} 
 \| \,  |\cdot |^N  |u|^2\|_{L^\infty } \le 2  \|u\| _{ L^2}  \|\,  |\cdot | \nabla u\| _{ L^2}.
\end{equation} 
Indeed, considering $u$ as a function of $r>0$, we have
\begin{equation*} 
r^N  |u(r)|^2= - \int _r^\infty \frac {d} {ds}[ s^N  |u(s)|^2 ]= -N \int _r^\infty s^{N-1}  |u(s)|^2 
+ 2 \int _r^\infty s^N \Re ( \overline{u} \partial _r u).
\end{equation*} 
We deduce that
\begin{equation*} 
\begin{split} 
r^N  |u(r)|^2 &\le 2\int _r^\infty s^{N}  |u(s)| \,  |\partial _r u(s)| \\
& \le 2  \Bigl( \int _r^\infty  s^{N-1} |u(s)|^2 \Bigr)^{\frac {1} {2}}
\Bigl( \int _r^\infty  s^{N+1} |\partial _r u(s)|^2 \Bigr)^{\frac {1} {2}}\\
& = 2  \|u\| _{ L^2 (\{  |x|>r \})}  \|\,  |\cdot | \nabla u\| _{ L^2 (\{  |x|>r \})},
\end{split} 
\end{equation*} 
which proves~\eqref{feGGLp:u}. 
It now follows from~\eqref{feGGLp:u} that
\begin{equation*} 
\int  |x|^2  |u|^{\alpha +2} \le  \| \,  |\cdot |^N |u|^2\| _{ L^\infty  }^{\frac {2} {N}} \int  |u|^{\alpha +2-\frac {4} {N}}\le 2^{\frac {2} {N}}  M ^{\frac {2} {N}}   \|\,  |\cdot | \nabla u\| _{ L^2 } ^{\frac {2} {N}}  \int  |u|^{\alpha +2 -\frac {4} {N}}.
\end{equation*} 
Since, by H\"older,
\begin{equation*} 
 \int  |u|^{\alpha +2 -\frac {4} {N}} \le  M^{\frac {8} {N\alpha }}   \Bigl( \int  |u|^{\alpha +2} \Bigr)^{\frac {N\alpha -4} {N\alpha }} ,
\end{equation*}
we deduce that
\begin{equation}  \label{fSppl:d} 
\int  |x|^2  |u|^{\alpha +2} \le 2^{\frac {2} {N}}  M ^{\frac {2\alpha +8} {N\alpha }}   \Bigl( \int  |u|^{\alpha +2} \Bigr)^{\frac {N\alpha -4} {N\alpha }}  
 \|\,  |\cdot | \nabla u\| _{ L^2 } ^{\frac {2} {N}} .
\end{equation} 
Suppose first that $\alpha >4/N$ and fix $0<\eta \le 1$. 
Applying Young's inequality $xy \le \eta ^{-\frac {p} {p'}}\frac {x^p} {p} + \eta \frac {y^{p'}} {p'}$ with $\frac {1} {p}= \frac {N\alpha -4} {N\alpha }$, it follows that
\begin{equation} \label{fSppl:u} 
 2^{- \frac {2} {N}}\int  |x|^2  |u|^{\alpha +2} \le \eta ^{- \frac {4} {N\alpha -4}}   \frac {N\alpha -4} {N\alpha }\int  |u|^{\alpha +2}
+ \eta \frac {4} {N\alpha }  M ^{\frac {\alpha +4} {2}}
 \|\,  |\cdot | \nabla u\| _{ L^2 } ^{\frac {\alpha } {2}} .
\end{equation} 
If $\alpha <4$, then 
we apply again Young's inequality to the last term in the right-hand side of~\eqref{fSppl:u} and we obtain
\begin{multline*} 
 2^{- \frac {2} {N}}\int  |x|^2  |u|^{\alpha +2} \le \eta ^{- \frac {4} {N\alpha -4}}   \frac {N\alpha -4} {N\alpha }\int  |u|^{\alpha +2}   + \frac {\eta} {N}   \|\,  |\cdot | \nabla u\| _{ L^2 } ^2 \\ + 
 \frac {\eta (4-\alpha )} {N\alpha } M^{\frac {2\alpha +8} {4-\alpha }}.
\end{multline*}  
The estimate~\eqref{feGGLn:u} follows  by choosing appropriately $\eta $. 
If $\alpha =4$ (note that $4>4/N$ since $N>1$), then~\eqref{feGGLn:u} follows from~\eqref{fSppl:u} by choosing $\eta$ sufficiently small. 
It remains to consider the case $\alpha =4/N$, in which~\eqref{fSppl:d} becomes
\begin{equation}  \label{fSppl:t} 
\int  |x|^2  |u|^{\alpha +2} \le 2^{\frac {2} {N}}  M ^{\frac {2\alpha +8} {N\alpha }} 
 \|\,  |\cdot | \nabla u\| _{ L^2 } ^{\frac {2} {N}} .
\end{equation} 
Since $N>1$, we may apply Young's inequality to deduce~\eqref{feGGLn:u}.
\end{proof} 

Assuming $N\ge 2$, $4/N \le \alpha \le 4$ and $\DI $ is radially symmetric, 
one can then continue as follows.
Setting $M= \sqrt K  \| \DI \| _{ L^2 } $, we deduce from~\eqref{fPRFd} and Lemma~\ref{eGGLn} that there exists a constant $C_3>0$ such that
\begin{equation}   \label{feGGLh:h} 
\int _{ \R^N  } \Bigl\{ - 2 |x|^2  |\nabla u^\theta |^2+ \frac {\alpha +4} {\alpha +2}  |x|^2  |u^\theta |^{\alpha +2}\Bigr\} \le  C_3 + C_3  \int _{ \R^N  } |u^\theta |^{\alpha +2} ,
\end{equation} 
for all $0\le \theta <\frac {\pi } {2}$ and all $0\le t< \tau ^\theta $.
It follows from~\eqref{feGGLh:q}, \eqref{feGGLh:pbu}  and~\eqref{feGGLh:h}  that 
\begin{multline} \label{feGGLh:n} 
\int _{ \R^N  } |x|^2 |u^\theta |^2 \le \int _{ \R^N  } |x|^2 | \DI |^2
+ (C_1 + C_2+2C_3) t + N\alpha E(\DI) ) t^2 \\ 
+ 2C_3 \cos \theta  \int _0^t \int  _{ \R^N  }  |u^\theta |^{\alpha +2} .
\end{multline} 
Using~\eqref{fPRFh} we see that there exists $C_4$ such that
\begin{equation*} 
\int _{ \R^N  } |x|^2 |u^\theta |^2 \le C_4
+ (C_1 + C_2+2C_3) t + N\alpha E(\DI)  t^2,
\end{equation*} 
for all $-\frac {\pi } {2}\le \theta <\frac {\pi } {2}$ and all $0\le t< \tau ^\theta $.
We then may conclude as in the proof of Theorem~\ref{eGGLhbu}.

Thus we see how to obtain a uniform estimate of $\Tma ^\theta $ by using the variance identity. However, we use Lemma~\ref{eGGLn} and this is why we assume that $\DI$ is radially symmetric and that 
 $N\ge 2$ and $4/N \le \alpha \le 4$. 
 Therefore, we obtain a weaker result than Theorem~\ref{eGGLhbu} (which does not require finite variance). 
 
The obstacle for improving this argument seems to be Lemma~\ref{eGGLn}. 
Unfortunately, both the symmetry assumption and the requirement $\alpha \le 4$ are necessary in 
Lemma~\ref{eGGLn}.

Let us first observe that radial symmetry is essential in Lemma~\ref{eGGLn}.
Indeed, fix $\varphi \in C^\infty _\Comp (\R^N )$, $\varphi \not \equiv 0$ and let $u(x)$ be given by \eqref{fVaruqsbuu}. Taking $g(x) \equiv |x|^2$ in~\eqref{fVaruqsbud} and~\eqref{fVaruqsbut} and $g(x) \equiv 1$ in~\eqref{fVaruqsbut},  we see that~\eqref{feGGLn:u} cannot hold for arbitrarly large $|x_0|$ when $N\alpha>4$. (And not even for $N\alpha=4$, since we may choose $\varphi$ such that 
$\|\varphi\|_{L^{\alpha+2}}^{\alpha+2} \gg \|\nabla \varphi\|_{L^2}^2$.)
 
We next remark that the restriction $\alpha\le 4$ is also essential in Lemma~\ref{eGGLn}. Indeed, let $u$ be defined by \eqref{fGGLuq:u} for some  $\varphi \in C^\infty (\R) $,  $\varphi  \not \equiv 0$ supported in $[1,2]$ and for $\lambda, r_0>0$. 
Applying the first identity in~\eqref{fGGLuq:t} with $g(x) \equiv  |x|^2$ and the 
first identity in~\eqref{fGGLuq:q} with $g(x) \equiv 1$, we deduce that
\begin{gather} 
\int |x|^2 |\nabla u| ^{2}\le \lambda ^{2} 2^{N+1}\omega_N  r_0 ^{2} \|\varphi '\| _{ L^2 (\R)}^2, 
\label{fFinu} \\
\int |u| ^{\alpha+2} \le \lambda ^{\frac {\alpha} {2}} 2^{N-1} \omega_N r_0 ^{-\frac {(N-1)\alpha} {2}} \|\varphi \| _{ L^{\alpha+2}(\R) }^{\alpha+2}, \label{fFind}
\end{gather}   
for all $\lambda \ge 2/r_0$.
Moreover, applying the first identity in~\eqref{fGGLuq:q} with $g(x) \equiv  |x|^2$, we obtain
\begin{equation} \label{fFint}
\int |x|^2 |u| ^{\alpha+2} \ge  \lambda ^{\frac {\alpha} {2}} \omega_N r_0 ^{2-\frac {(N-1)\alpha} {2}} \|\varphi \| _{ L^{\alpha+2} (\R)}^{\alpha+2},
\end{equation}  
for all $\lambda >0$. 
Applying~\eqref{fNL2} and~\eqref{fFinu}--\eqref{fFint}, we see that if~\eqref{feGGLn:u} holds then there is a constant $A>0$ such that
\begin{equation*} 
\lambda ^{\frac {\alpha } {2}} r_0 ^{2-\frac {(N-1)\alpha} {2}} \le A (1 +   \lambda ^{2} r_0 ^{2} +
\lambda ^{\frac {\alpha} {2}} r_0 ^{-\frac {(N-1)\alpha} {2}})
\end{equation*} 
for all $r_0>0$ and $\lambda \ge 2/r_0$. Taking $r_0 = \sqrt{2A} $, we obtain
\begin{equation*} 
\lambda ^{\frac {\alpha } {2}} r_0 ^{-\frac {(N-1)\alpha} {2}} \le 1 +   \lambda ^{2} r_0 ^{2}
\end{equation*} 
for all $\lambda \ge 2/r_0$, which yields $\alpha \le 4$.

\appendix

\section{Proof of Lemma~\ref{eOzau}} \label{sProofLemmaeOzau} 

We follow the method of~\cite{OgawaTu}, and we construct a family $(\GVar _ \varepsilon ) _{ \varepsilon >0 }$ such that, given $a,A$, the estimate~\eqref{eOzau:u} holds with $\GVar = \GVar _\varepsilon $ provided $\varepsilon >0$ is sufficiently small.
Fix  a function $h\in C^\infty  ([0, \infty ))$ such that
\begin{equation} \label{fYTMu} 
h\ge 0,\quad \Supp h\subset [1,2 ], \quad \int _0^\infty h(s)\,ds =1, 
\end{equation} 
 and let
\begin{equation} \label{fYTMubu} 
\zeta (t)= t- \int _0^t (t-s) h(s)\,ds = t- \int _0^t \int _0^s h(\sigma )\,d\sigma ds,
\end{equation} 
for $t\ge 0$. It follows that $\zeta \in C^\infty ([0,\infty )) \cap W^{4,\infty }((0,\infty )$, 
$\zeta '\ge 0$, $ \zeta ''\le 0$, 
$\zeta (t)= t$ for $t\le 1$ and $\zeta (t)=M$ for $t\ge 2$ with $M= \int _0^2 sh(s)\,ds$.
Set
\begin{equation} \label{fLMmu} 
\Phi (x)= \zeta (  |x|^2) .
\end{equation} 
It follows in particular that $\Phi \in C^\infty (\R^N ) \cap W^{4, \infty } (\R^N ) $.
Given any $\varepsilon >0$, set
\begin{equation} \label{fYTMq} 
\GVar _\varepsilon  (x)= \varepsilon ^{-2 } \Phi (\varepsilon x),
\end{equation} 
so that
\begin{equation} \label{fYTMuz} 
  \| \Delta ^2 \GVar _\varepsilon  \| _{ L^\infty  }= \varepsilon ^2  \| \Delta ^2 \Phi \| _{ L^\infty  }. 
\end{equation} 
Next, set
\begin{equation} \label{fLMmd} 
\xi (t)= \sqrt{2( 1-\zeta '(t)) -4t\zeta ''(t)}= \sqrt{2\int _0^t h(s)\,ds +4 th(t)}.
\end{equation} 
It is not difficult to check that $\xi \in C^1([0,\infty )) \cap W^{1, \infty }(0,\infty )$. 
Let
\begin{equation} \label{fLMmt} 
\gamma (r)= \xi (r^2),
\end{equation} 
and, given $\varepsilon >0$, let
\begin{equation} \label{fLMmq} 
\gamma _\varepsilon (r)= \gamma (\varepsilon r) .
\end{equation} 
It easily follows that $\gamma _\varepsilon $ is supported in $[\varepsilon ^{-1}, \infty )$, so that 
\begin{equation} \label{fLMmc} 
\| r^{-(N-1)} \gamma _\varepsilon ' \| _{ L^\infty  } \le \varepsilon ^{N-1} \|  \gamma _\varepsilon ' \| _{ L^\infty  }=  \varepsilon ^N  \|\gamma '\| _{ L^\infty  } ,
\end{equation} 
and
\begin{equation} \label{fLMms} 
  \| r^{-(N-1)} \gamma _\varepsilon u_r\| _{ L^2 }  \le \varepsilon ^{N-1} 
    \|  \gamma _\varepsilon u_r\| _{ L^2 } .
\end{equation} 
Set
\begin{multline} \label{fLMmuz} 
I_\varepsilon (u)= 
 -2  \int _{ \R^N  } (2 - \GVar _\varepsilon '' ) |u_r|^2 \\
 + \frac {\alpha  } {\alpha +2}\int _{ \R^N  } (2N-  \Delta  \GVar_\varepsilon )  |u|^{\alpha +2}
  -\frac {1}{2} \int _{ \R^N  } \Delta^2  \GVar _\varepsilon  |u|^2.
\end{multline} 
Elementary but long calculations using in particular~\eqref{fLMmd} show that
\begin{equation} \label{fDefHeps} 
2- \GVar _\varepsilon ''(x)= \gamma _\varepsilon ( |x|)^2,
\end{equation} 
and
\begin{equation}  \label{fGVlap} 
2N- \Delta \GVar _\varepsilon (x) 
= N  [ \gamma _\varepsilon ( |x|)]^2 + 4(N-1) (\varepsilon  |x|)^2 \zeta ''(\varepsilon ^2 |x|^2) \le 
N  [ \gamma _\varepsilon ( |x|)]^2.
\end{equation} 
We deduce from~\eqref{fLMmuz}, \eqref{fDefHeps},  \eqref{fGVlap} and~\eqref{fYTMuz} that
\begin{equation} \label{fLMmud} 
I_\varepsilon (u) \le -2\int _{ \R^N  } \gamma _\varepsilon ^2  |u_r|^2 + \frac {N\alpha } {\alpha +2}\int _{ \R^N  } \gamma _\varepsilon ^2  |u|^{\alpha +2} + \frac {\varepsilon ^2} {2}  \|\Delta^2 \Phi \| _{ L^\infty  }  \|u\| _{ L^2 }^2. 
\end{equation} 
We next claim that
\begin{equation} \label{fCLu} 
\| \gamma _\varepsilon ^{\frac {1} {2}} u \| _{ L^\infty  }^2 
\le \varepsilon ^N  \|  \gamma  ' \| _{ L^\infty  }
   \|u\| _{ L^2 }^2  + 2 \varepsilon ^{N-1}    \|u\| _{ L^2 }  
    \|  \gamma _\varepsilon  u_r\| _{ L^2 } .
\end{equation} 
Indeed,
\begin{equation} \label{fLMmh} 
\begin{split} 
\gamma _\varepsilon (r)  |u(r)|^2 & = - \int _r^\infty \frac {d} {ds}[\gamma _\varepsilon (s)  |u(s)|^2] 
 \le \int _0^\infty   |\gamma _\varepsilon '|\,  |u|^2 + 2 \int _0^\infty \gamma _\varepsilon  |u|\,  |u_r| \\
& \le   \| r^{-(N-1)} \gamma _\varepsilon ' \| _{ L^\infty  }  \|u\| _{ L^2 }^2 + 2  \|u\| _{ L^2 }  \| r^{-(N-1)} \gamma _\varepsilon u_r\| _{ L^2 }.
\end{split} 
\end{equation} 
(The above calculation is valid for a smooth function $u$ and is easily justified for a general $u$ by density.)
The estimate~\eqref{fCLu}  follows from~\eqref{fLMmh}, \eqref{fLMmc} and~\eqref{fLMms}.   
We now observe that 
\begin{equation} \label{fLMmp} 
\int _{ \R^N  } \gamma _\varepsilon ^2 |u|^{\alpha +2}= \int _{ \R^N  }
\gamma _\varepsilon ^{\frac {4- \alpha } {2}}
[ \gamma _\varepsilon^{\frac {1} {2}}  |u|  ]^\alpha 
 |u|^2\le   \| \gamma  \| _{ L^\infty  }^{\frac {4- \alpha } {2}}  \| \gamma _\varepsilon ^{\frac {1} {2}} u \| _{ L^\infty  }^\alpha   \| u\| _{ L^2 }^2.
\end{equation} 
Applying~\eqref{fCLu} and the inequality $x^{\frac {\alpha } {2}}\le 1+x^2$, we deduce from~\eqref{fLMmp} that there exists a constant $C$ independent of $\varepsilon >0$ and $u$ such that
\begin{equation} \label{fLMmuu} 
 \frac {N\alpha } {\alpha +2} \int _{ \R^N  } \gamma _\varepsilon ^2 |u|^{\alpha +2} \le C   \varepsilon ^{\frac {(N-1) \alpha } {2}}  \|u\| _{ L^2 }^{\frac {\alpha } {2}+ 2}  \Bigl( 
\varepsilon ^{\frac {\alpha } {2}}  \|u\| _{ L^2 }^{\frac {\alpha } {2}} +  1 +  
 \| \gamma _\varepsilon u_r\| _{ L^2 }^2 \Bigr).
\end{equation} 
Estimates~\eqref{fLMmud} and~\eqref{fLMmuu} now yield
\begin{multline} \label{fLMmut} 
I_\varepsilon (u) \le - (2- C   \varepsilon ^{\frac {(N-1) \alpha } {2}}  \|u\| _{ L^2 }^{\frac {\alpha } {2}+ 2} ) \int _{ \R^N  } \gamma _\varepsilon ^2  |u_r|^2 \\ + 
C   \varepsilon ^{\frac {(N-1) \alpha } {2}}  \|u\| _{ L^2 }^{\frac {\alpha } {2}+ 2}  \Bigl( 
\varepsilon ^{\frac {\alpha } {2}}  \|u\| _{ L^2 }^{\frac {\alpha } {2}} +  1 \Bigr)
+ \frac {\varepsilon ^2} {2}  \|\Delta^2 \Phi \| _{ L^\infty  }  \|u\| _{ L^2 }^2. 
\end{multline} 
We now fix $0\le a,A<\infty $ and we first choose $\varepsilon >0$ sufficiently small so that 
$C   \varepsilon ^{\frac {(N-1) \alpha } {2}}  A^{\frac {\alpha } {2}+ 2}\le 2$. It then follows from~\eqref{fLMmut} that if $ \|u\| _{ L^2 }\le A$, then
\begin{equation} \label{fLMmuq} 
I_\varepsilon (u) \le C   \varepsilon ^{\frac {(N-1) \alpha } {2}}  A^{\frac {\alpha } {2}+ 2}  \Bigl( 
\varepsilon ^{\frac {\alpha } {2}}  A^{\frac {\alpha } {2}} +  1 \Bigr)
+ \frac {\varepsilon ^2} {2}  \|\Delta^2 \Phi \| _{ L^\infty  }  A^2.
\end{equation} 
Choosing $\varepsilon >0$ possibly smaller, but depending on $a,A$, we deduce that $I_\varepsilon (u)\le a$ if $ \|u\| _{ L^2 }\le A$.  This completes the proof.

\end{document}